\documentclass[11pt, reqno]{amsart}

\usepackage[english]{babel}
\usepackage[left=3.5cm, right=3.5cm, top=3.2cm, bottom=3.2cm]{geometry}
\usepackage{url, amsfonts, amsmath, amsthm, amssymb, mathtools, mathrsfs, enumerate}
\usepackage{color, xcolor, verbatim, tensor, tikz, tikz-cd}
\usetikzlibrary{angles,quotes}
\usepackage[colorlinks=true,citecolor=blue]{hyperref}
\hypersetup{linkcolor=magenta, linktoc=page}
\usetikzlibrary{matrix,calc}
\definecolor{wine-stain}{rgb}{0.5,0,0}

\newcommand{\red}[1]{\textcolor{red}{#1}}

\newtheorem{thm}{Theorem}[section]
\newtheorem{prop}[thm]{Proposition}
\newtheorem{lem}[thm]{Lemma}
\newtheorem{cor}[thm]{Corollary}

\theoremstyle{definition}

\newtheorem*{ques*}{Question}
\newtheorem*{thm*}{Theorem}
\newtheorem*{rem*}{Remark}
\newtheorem*{rems*}{Remarks}
\newtheorem*{exs*}{Examples}
\newtheorem*{mthm*}{Main Theorem}

\numberwithin{equation}{section}

\newcommand{\la}{\lambda}

\newcommand{\RR}{\mathbb{R}}

\newcommand{\sL}{\mathcal{L}}

\newcommand{\dvol}{\mathrm{dvol}}

\newcommand{\Rc}{\mathrm{Ric}}

\newcommand{\Div}{\mathrm{div}}

\makeatletter
\renewcommand*{\eqref}[1]{%
	\hyperref[{#1}]{\textup{\tagform@{\ref*{#1}}}}%
}
\makeatother

\title[Generalized $m$-quasi-Einstein manifolds of Yamabe-type]{On the rigidity of generalized $m$-quasi-Einstein manifolds of Yamabe-type}
\author[Ramesh Mete]{Ramesh Mete}
\address{Department of Mathematics, Indian Institute of Technology Bombay, Powai, Mumbai - 400076, India.}
\email{ramesh2025m@gmail.com, rameshm@math.iitb.ac.in}
\date{\today}
\subjclass[2020]{Primary 53C25, 53C20, 53C65; Secondary 53E40}
\keywords{Generalized $m$-quasi-Einstein manifolds of Yamabe-type; Almost Yamabe solitons; Ricci solitons; Conformal, Killing and geodesic vector fields}

\begin{document}
	
\maketitle
	
\begin{abstract}
Motivated by the concept of almost Yamabe solitons, a special class of generalized $m$-quasi-Einstein manifolds is investigated in this paper. We refer to these Riemannian manifolds as {\em generalized $m$-quasi-Einstein manifolds of Yamabe-type}. We study the rigidity properties for the potential (or defining) vector field associated to these manifolds in both the compact and non-compact settings. We show that under certain natural assumptions the potential vector field either vanishes identically or becomes a non-trivial Killing vector field.
\end{abstract}

\section{Introduction}
\label{sec:introduction}
	
Suppose $(M, g)$ is a Riemannian manifold of dimension $n$. We always assume that the manifold $M$ is connected and without boundary. Let $X \in \mathfrak{X}(M)$ be a smooth vector field on $M$ and let us fix a non-zero constant $m \in \RR \setminus \{0\}$. We denote by $X^{\flat}$ the dual $1$-form associated to $X$ with respect to the Riemannian metric $g$, i.e. $X^{\flat}(Y) := g(Y, X)$ for all smooth vector fields $Y \in \mathfrak{X}(M)$.

\vspace*{1mm}
Given a real-valued smooth function $\la: M \to \RR$, the quadruple $(M, g, X, \la)$ is called a {\em generalized $m$-quasi-Einstein ($m$-QE) manifold} if it solves the following equation
\begin{equation}\label{eq:gen-m-quasi-Ein}
\Rc + \frac{1}{2}\sL_{X} g - \frac{1}{m} X^{\flat}\otimes X^{\flat} = \la g,
\end{equation}
where $\sL_{X}$ denotes the Lie derivative along the smooth vector field $X$ and $\Rc$ stands for the Ricci curvature of the metric $g$. As in \cite{BarrosRibeiro2012}, the tensor quantity on the left-hand side in \eqref{eq:gen-m-quasi-Ein} is sometimes denoted by $\Rc_{X}^{m}$. Moreover, when $X := \nabla f$ for some smooth function $f \in C^\infty(M, \RR)$ this $(0, 2)$-tensor is the so-called {\em $m$-Bakry-Emery Ricci tensor}, denoted by $\Rc_{f}^{m}$, and it is given as follows:
\begin{align*}
\Rc_{f}^{m} = \Rc + \nabla^{2} f - \frac{1}{m} df \otimes df.
\end{align*}
Here, $\nabla$ denotes the {\em Levi-Civita connection} of the Riemannian manifold $(M, g)$. Note that for $X := \nabla f$ the second term involving the Lie derivative in \eqref{eq:gen-m-quasi-Ein} becomes the {\em Hessian} of the potential function $f$, denoted by $\nabla^{2} f$ or $\text{Hess}(f)$, and we also have $X^{\flat} = df$ for the third term on the left-hand side in \eqref{eq:gen-m-quasi-Ein}. The smooth vector field $X$ is said to be the {\em potential (or defining) vector field} for the generalized $m$-QE manifold $(M, g, X, \la)$ and the metric $g$ satisfying \eqref{eq:gen-m-quasi-Ein} will be called a {\em generalized $m$-QE metric}.

\vspace*{1mm}
We now mention several special cases of the above equation. Firstly, when $X := \nabla f$ for some $f\in C^{\infty}(M, \RR)$ the quadruple $(M, g, \nabla f, \la)$ is called a {\em generalized gradient $m$-QE manifold} and the function $f$ is called a {\em potential function}. For simplicity, a generalized gradient $m$-QE manifold is also denoted by $(M, g, f, \la)$. Secondly, if the function $\la$ in the equation \eqref{eq:gen-m-quasi-Ein} is {\em constant} (i.e. $\la \in \RR$), then the quadruple $(M, g, X, \la)$ is called a {\em $m$-QE manifold}, and further it is said to be a {\em gradient $m$-QE manifold} when $X = \nabla f$. Next, we say that a generalized $m$-QE manifold is {\em trivial} if the potential vector field $X$ vanishes identically (i.e. $X \equiv 0$). Otherwise, it will be called {\em non-trivial}. In the case $X \equiv 0$, one can easily see that the equation \eqref{eq:gen-m-quasi-Ein} is reduced to
\begin{equation*}
\Rc = \la g
\end{equation*}
and hence $(M, g)$ is an {\em Einstein manifold} when the dimension $n \geq 3$ by the well-known Schur's lemma for the Ricci tensor. Lastly, when $m  \to \pm\infty$ the equation \eqref{eq:gen-m-quasi-Ein} becomes
\begin{align*}
\Rc + \frac{1}{2} \sL_{X} g = \la g.
\end{align*}
This is the so-called {\em Ricci almost soliton equation}, which was first introduced and studied in \cite{Pigola-et-al-2011}. When $\la$ is a fixed {\em constant} (i.e. $\la \in \RR$) these are called {\em Ricci solitons}, which are investigated extensively in the past few decades. Note that Ricci solitons are self-similar solutions of the celebrated Ricci flow introduced by Hamilton in his seminal paper \cite{Hamilton1982}.

\vspace*{1mm}
In \cite{Catino2012}, Catino defined that a complete Riemannian manifold $(M^n, g)$, $n \geq 3$, is a {\em generalized quasi-Einstein manifold}, if there exist three smooth functions $f, \mu, \la$ on $M$ such that
\begin{align*}
\Rc + \nabla^{2} f - \mu df \otimes df = \la g.
\end{align*}
Later in \cite{BarrosRibeiro2014}, Barros-Ribeiro Jr introduced the notion of {\em generalized $m$-quasi-Einstein ($m$-QE) manifolds} in the sense of equation \eqref{eq:gen-m-quasi-Ein} (assuming that $0 < m \leq \infty$ is an integer) and they also gave some non-trivial examples of generalized $m$-QE metrics on the standard unit $n$-sphere $\mathbb{S}^n$. It was proved in \cite[Theorem 1]{Barros-Gomes2013} that if $(M^n, g, \nabla f, \la)$ is a nontrivial compact generalized $m$-QE manifold with $n \geq 3$ (and $0 < m \leq \infty$ is an integer) satisfying $\sL_{\nabla u}R \geq 0$, where $u := e^{- f/m}$, then the scalar curvature $R$ is constant and $M^n$ is isometric to a standard sphere $\mathbb{S}^n(r)$. There are also several integral formulae for compact generalized $m$-QE manifolds, for instance, see \cite[Theorem 4]{BarrosRibeiro2014} and \cite[Proposition 1, 2]{Barros-Gomes2017}. The reader is referred to the following articles \cite{Ghosh2020, Guler-De2022, PoddShaBal2023-gen-m-QE, Ghosh2026, CollingDunajski2026, Cochran2025arXiv, CarvLimCosta-Filho2026} for some recent results about generalized $m$-QE manifolds.

\vspace*{1mm}
One can view generalized $m$-QE manifolds as certain generalization of Ricci solitons (and {\em almost Yamabe solitons} which are defined below). These manifolds are connected with general relativity. More precisely, they provide mathematical frameworks which are used to model extremal black holes by generalizing the standard near-horizon geometry. For this reason, some authors prefer to call them as {\em generalized extremal horizon manifolds/equations} (for instance, see \cite{BGKW2022, Wylie2023, CollingDunajski2026}).

\vspace*{1mm}
A smooth vector field $Y \in \mathfrak{X}(M)$ is said to be {\em conformal} if 
\begin{equation}\label{eq:conformal-vector-field}
\frac{1}{2} \sL_{Y} g = \psi g
\end{equation}
for some real-valued smooth function $\psi$ on $M$; and it is said to be {\em Killing} if the conformal factor $\psi$ is identically zero on $M$. When $\psi$ is any constant (not just identically zero) the conformal vector field $Y$ is called {\em homothetic}. We say that $Y \in \mathfrak{X}(M)$ is a {\em geodesic vector field} if $\nabla_{Y}Y = 0$ (i.e. integral curves of $Y$ are geodesics). Recall that $Y$ is called {\em closed} if $\nabla_{Z}Y = \psi Z$ for all $Z \in \mathfrak{X}(M)$ and some function $\psi \in C^{\infty}(M, \RR)$; and in particular, it is called {\em parallel} if the function $\psi$ is identically zero. Note that closed vector fields are conformal; and all parallel vector fields are geodesic vector fields. In \cite[Theorem 1]{Barros-Gomes2017}, Barros-Gomes proved that any {\em compact} $m$-QE manifold $(M^n, g, X, \la)$ of dimension $n \geq 3$ (with $0 < m \leq \infty$ is an integer) must be trivial (i.e. $X \equiv 0$) if $(M^n, g)$ is itself an Einstein manifold. Moreover, they showed that the potential vector field $X$ for any compact $m$-QE manifold must be Killing if it is a conformal vector field (cf. \cite[Corollary 1]{Barros-Gomes2017}).

\vspace*{1mm}
In this article, we restrict our attention to certain class of generalized $m$-QE manifolds $(M^n, g, X, \la)$ with $\la := R - \rho$, where $R$ denotes the scalar curvature of the metric $g$ and $\rho: M \to \RR$ is a smooth function. In other words, we consider the following equation
\begin{equation}\label{eq:gen-m-QE-Yam-type}
\Rc + \frac{1}{2}\sL_{X} g - \frac{1}{m} X^{\flat}\otimes X^{\flat} = (R - \rho) g.
\end{equation}
In this case, we say that $g$ is a {\em generalized $m$-QE metric of Yamabe-type}. Observe that if the Ricci tensor satisfies 
$$\Rc = \frac{1}{m} X^{\flat}\otimes X^{\flat},$$ 
then the equation \eqref{eq:gen-m-QE-Yam-type} becomes 
\begin{equation}\label{eq:almost-Yam-soliton}
\frac{1}{2}\sL_{X} g = (R - \rho) g.
\end{equation}
Any metric $g$ satisfying the equation \eqref{eq:almost-Yam-soliton} is called an {\em almost Yamabe soliton}, which was first introduced and studied by Barbosa-Ribeiro Jr \cite{Barbosa-Ribeiro2013}. When $\rho$ is a constant (i.e. $\rho \in \RR$), it is said to be a {\em Yamabe soliton}. It was proved in \cite[Proposition 2.1]{Barbosa-Ribeiro2013} that almost Yamabe solitons are conformal solutions of the {\em non-normalized Yamabe flow}, introduced by Hamilton \cite{Hamilton1986}. Similar to Ricci solitons, the classification and rigidity results for Yamabe solitons have been studied extensively; for instance, see \cite{di Cerbo-Disconzi2008, Hsu2012, Cao-Sun-Zhang2012, Daskalo-Sesum2013, Maeta2019, Maeta2020}. For some recent results about almost Yamabe solitons one can see \cite{Barbosa-Ribeiro2013, PoddShaBal2024-almost-Yam, HwangYun2025arXiv, Mete2026-almost-Yam} and references therein.

\vspace*{1mm}
We now state our main results in the compact case. Throughout the paper, given a Riemannian metric $g$ we use the notation $\dvol_{g}$ to denote the {\em  Riemannian volume element} corresponding to $g$ and the notation $\langle \bullet, \bullet \rangle$ to denote the quantity $g(\bullet, \bullet)$. We say that a manifold is {\em closed} if it is compact and without boundary.
	
\begin{thm}\label{thm:parallel-conclusion-for-geod-vec-field-on-gen-m-QE-Yam-type}
Let $(M, g, X, \rho)$ be a closed generalized $m$-quasi-Einstein manifold of Yamabe-type in the sense of equation \eqref{eq:gen-m-QE-Yam-type}. Assume that $\rho \in C^\infty(M, \RR)$ satisfies
\begin{equation}\label{eq:integration-of-X-rho-nonpositive-assumption}
\int_{M} \langle X, \nabla \rho \rangle \dvol_{g} \leq 0.
\end{equation}
Then the potential vector field $X$ is parallel if any of the following conditions holds:
\begin{enumerate}[~~~(i)]
\item The norm $\lvert X \rvert$ is constant and
\begin{align}\label{eq:integration-of-Ric-X-X-is-nonpositive}
\int_{M} \Rc(X, X) \dvol_{g} \leq 0;
\end{align}
	
\item $X$ is a geodesic vector field and the inequality \eqref{eq:integration-of-Ric-X-X-is-nonpositive} holds.
\end{enumerate}
\end{thm}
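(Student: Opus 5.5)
The plan is to derive an integral identity for $\int_M \lvert \nabla X\rvert^2\,\dvol_g$ whose right-hand side is controlled by the two hypotheses \eqref{eq:integration-of-X-rho-nonpositive-assumption} and \eqref{eq:integration-of-Ric-X-X-is-nonpositive}, and then conclude $\nabla X\equiv 0$.

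\emph{Step 1: trace and contraction.} Tracing \eqref{eq:gen-m-QE-Yam-type} gives
\begin{equation*}
\Div X = (n-1)R - n\rho + \frac{1}{m}\lvert X\rvert^2 .
\end{equation*}
Contracting \eqref{eq:gen-m-QE-Yam-type} with $X\otimes X$ gives
\begin{equation*}
\Rc(X,X) + \tfrac12 X(\lvert X\rvert^2) - \tfrac1m \lvert X\rvert^4 = (R-\rho)\lvert X\rvert^2 .
\end{equation*}
Under either (i) or (ii) we have $X(\lvert X\rvert^2)=2\langle\nabla_X X,X\rangle=0$. So in both cases the terms involving $X(\lvert X\rvert^2)$ drop out, and $\Rc(X,X)=(R-\rho)\lvert X\rvert^2+\lvert X\rvert^4/m$. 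This is the only place where (i) or (ii) should enter, apart from the Bochner step below in case (ii).

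\emph{Step 2: a Bochner-type identity.} I would use the pointwise identity
\begin{equation*}
\Div(\nabla_X X) - \Div\big((\Div X)X\big) = \Rc(X,X) + \operatorname{tr}\big((\nabla X)^2\big) - (\Div X)^2 .
\end{equation*}
Integrating it over the closed manifold gives $\int \operatorname{tr}((\nabla X)^2) = \int (\Div X)^2 - \int \Rc(X,X)$. Write $\nabla X = S + A$, where $S=\tfrac12\sL_X g$ is the symmetric part and $A$ the skew part. Then $\operatorname{tr}((\nabla X)^2)=\lvert S\rvert^2-\lvert A\rvert^2$ and $\lvert\nabla X\rvert^2=\lvert S\rvert^2+\lvert A\rvert^2$.

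By \eqref{eq:gen-m-QE-Yam-type}, $S=(R-\rho)g-\Rc+\tfrac1m X^\flat\otimes X^\flat$. Hence $\lvert S\rvert^2$ and $(\Div X)^2$ can be expressed through $\lvert \Rc\rvert^2$, $R$, $\rho$, $\Rc(X,X)$ and $\lvert X\rvert^2$.

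\emph{Step 3: eliminate $X(R)$.} I would compute $\int X(\Div X)=-\int (\Div X)^2$. The same integral can be evaluated using Step 1 as $\int\big((n-1)X(R)-nX(\rho)\big)$, since the $X(\lvert X\rvert^2)$ term vanishes. To handle $\int X(R)$, I would take the divergence of \eqref{eq:gen-m-QE-Yam-type}. This uses $\Div \Rc=\tfrac12 dR$ and $\Div(\sL_X g)=\Delta X^\flat+d\Div X+\Rc(X)$, and then contracts the result with $X$. Integrating, this expresses $\int X(R)$ in terms of $\int\Rc(X,X)$, $\int\langle X,\nabla\rho\rangle$ and quadratic terms in $\nabla X$.

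Substituting back, I aim to arrive at an identity of the form
\begin{equation*}
c_1\int_M\lvert\nabla X\rvert^2 + (\text{nonnegative terms}) = -c_2\int_M \Rc(X,X) + c_3\int_M\langle X,\nabla\rho\rangle ,
\end{equation*}
with positive constants $c_i$. The hypotheses then force $\nabla X\equiv0$, so $X$ is parallel.

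The main obstacle is Step 3: getting the bookkeeping right so that all the curvature terms cancel, with only $\Rc(X,X)$ and $\langle X,\nabla\rho\rangle$ surviving with the correct signs. If the direct route becomes messy, I would instead integrate $\Div(\nabla_XX)$ separately. In case (ii) this vanishes pointwise. In case (i) I would replace it by $\tfrac12\Delta\lvert X\rvert^2=0$, i.e.\ $\lvert\nabla X\rvert^2=-\langle \nabla^*\nabla X,X\rangle$, and use the divergence of \eqref{eq:gen-m-QE-Yam-type} to rewrite $\nabla^*\nabla X$.
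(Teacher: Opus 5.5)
Your route is essentially the paper's. Contracting the divergence of \eqref{eq:gen-m-QE-Yam-type} with $X$ is exactly the computation behind the Bochner-type identity \eqref{eq:Bochner-type-identity-for-gen-m-QE-Yam-type}. The paper closes by combining the integral of that identity with the integrated trace identity \eqref{eq:identity-after-taking-trace-and-applying-nabla-and-inner-prod-with-X}, as in your Step 3.

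Step 2 is superfluous. The rough-Laplacian term in Step 3 already integrates to $-\int_M \lvert\nabla X\rvert^2\,\dvol_g$, so you need neither the Yano-type identity nor the split $\nabla X=S+A$. That split would only bring in $\int_M\lvert\Rc\rvert^2\,\dvol_g$.

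More importantly, the sign in your target identity is wrong. Carry out Step 3, using $\int_M\lvert X\rvert^2\Div X\,\dvol_g=-\int_M X(\lvert X\rvert^2)\,\dvol_g=0$. This is also needed for the term coming from $\Div(X^\flat\otimes X^\flat)$, so (i) or (ii) enters there too, not only in Step 1. One gets
\[
\int_M\lvert\nabla X\rvert^2\,\dvol_g+\frac{n-2}{n-1}\int_M(\Div X)^2\,\dvol_g=\int_M\Rc(X,X)\,\dvol_g+\frac{n-2}{n-1}\int_M\langle X,\nabla\rho\rangle\,\dvol_g .
\]
So the Ricci term enters with a plus sign. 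With $-c_2\int_M\Rc(X,X)\,\dvol_g$ and $c_2>0$, as you wrote it, the hypothesis \eqref{eq:integration-of-Ric-X-X-is-nonpositive} would point the wrong way and $\nabla X\equiv 0$ would not follow. Also note that $c_3=\frac{n-2}{n-1}$ vanishes for $n=2$, which is harmless.

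Your pointwise observation that $\nabla_XX=0$ forces $X(\lvert X\rvert^2)=0$ is a real simplification. By the divergence theorem it gives $\int_M\lvert X\rvert^2\Div X\,\dvol_g=0$ directly. This makes Lemma \ref{lem:integration-of-mod-X-square-Div-X-is-zero-for-geod-vec-field}, which the paper derives through the $m$-QE equation, unnecessary.
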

	
The same conclusion as in Theorem \ref{thm:parallel-conclusion-for-geod-vec-field-on-gen-m-QE-Yam-type} holds for conformal potential vector fields but we do not need the assumption \eqref{eq:integration-of-X-rho-nonpositive-assumption}.
	
\begin{prop}\label{prop:parallel-conclusion-for-conf-vec-field-on-gen-m-QE-Yam-type}
Let $(M, g, X, \rho)$ be a closed generalized $m$-quasi-Einstein manifold of Yamabe-type in the sense of equation \eqref{eq:gen-m-QE-Yam-type} for some $\rho \in C^\infty(M, \RR)$. If the potential vector field $X$ is conformal and the inequality \eqref{eq:integration-of-Ric-X-X-is-nonpositive} holds, then $X$ is parallel.
\end{prop}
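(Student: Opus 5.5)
The plan is to avoid using the soliton equation \eqref{eq:gen-m-QE-Yam-type} in any essential way and instead combine the conformal condition with the classical integral Bochner (Yano) formula, valid for any smooth vector field $X$ on a closed manifold:
\begin{equation*}
\int_{M} \Rc(X, X)\, \dvol_{g} = \int_{M} \Big( (\Div X)^{2} - \operatorname{tr}\big((\nabla X)^{2}\big) \Big) \dvol_{g}.
\end{equation*}
This identity comes from the Ricci identity $\Div(\nabla_{X}X) - \nabla_{X}(\Div X) = \Rc(X,X) + \operatorname{tr}((\nabla X)^{2})$, which holds pointwise. Integrating and applying the divergence theorem to $\Div(\nabla_X X)$ and to $\Div((\Div X)X)$ gives the formula. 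Here $(\nabla X)^{2}$ means the endomorphism $\nabla X$ composed with itself, and no boundary terms appear since $M$ is closed.

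Next I use the conformality of $X$. Write $\frac12 \sL_{X} g = \psi g$, so that $\psi = \frac{1}{n}\Div X$. Then I decompose the endomorphism $Z \mapsto \nabla_{Z}X$ as $\nabla X = \psi\, \mathrm{Id} + A$. The symmetric part of $\nabla X$ is exactly $\frac12\sL_X g = \psi g$, so $A$ is skew-adjoint. This gives
\begin{equation*}
\operatorname{tr}\big((\nabla X)^{2}\big) = n\psi^{2} - |A|^{2}, \qquad (\Div X)^{2} = n^{2}\psi^{2},
\end{equation*}
since the cross term $\operatorname{tr}A$ vanishes and $\operatorname{tr}(A^{2}) = -|A|^{2}$. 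Substituting into the integral formula yields
\begin{equation*}
\int_{M} \Rc(X, X)\, \dvol_{g} = \int_{M} \Big( n(n-1)\psi^{2} + |A|^{2} \Big) \dvol_{g} \;\geq\; 0 .
\end{equation*}

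Combining this with the hypothesis \eqref{eq:integration-of-Ric-X-X-is-nonpositive}, both nonnegative integrands must vanish identically. Hence $A \equiv 0$ and, for $n \geq 2$, $\psi \equiv 0$, so $\nabla X = 0$ and $X$ is parallel. As a byproduct, equality forces $\int_M \Rc(X,X)\,\dvol_g = 0$, and $X$ is Killing.

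The only delicate point I expect is the dimension. When $n=1$ the factor $n(n-1)$ vanishes and $\psi$ is not controlled. I would try first to handle this with the equation \eqref{eq:gen-m-QE-Yam-type}, but it gives nothing: in dimension one $\Rc \equiv 0$ and $R \equiv 0$, so the equation only prescribes $\rho = X^2/m - \psi$. Indeed $X = h\,\partial_s$ with non-constant $h$ on a circle is a counterexample there. So the proof must assume $n \geq 2$, as is implicit in the paper's setting; I would state this explicitly. The sign conventions in the Bochner identity should also be checked carefully, but they are standard.
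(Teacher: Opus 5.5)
Your argument is correct, and it takes a genuinely different route from the paper. The paper works through the structure equation. It integrates the $m$-QE Bochner-type identity \eqref{eq:Bochner-type-identity-for-gen-m-QE-Yam-type} and removes $\int_M \lvert X\rvert^2\Div(X)\,\dvol_g$ by Lemma~\ref{lem:integration-of-mod-X-square-Div-X-is-zero-for-conformal-vec-field}. It then invokes the Bourguignon--Ezin theorem to get $\int_M\langle X,\nabla R\rangle\,\dvol_g=0$. Finally it uses the integrated trace identity \eqref{eq:identity-after-taking-trace-and-applying-nabla-and-inner-prod-with-X} to replace $\int_M\langle X,\nabla\rho\rangle\,\dvol_g$ by $\frac1n\int_M\Div(X)^2\,\dvol_g$. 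This gives $\int_M\lvert\nabla X\rvert^2\,\dvol_g=\int_M\Rc(X,X)\,\dvol_g-\frac{n-2}{n}\int_M\Div(X)^2\,\dvol_g$.

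Your decomposition $\nabla X=\psi\,\mathrm{Id}+A$ yields exactly this identity without touching the equation. Indeed $\int_M\lvert\nabla X\rvert^2\,\dvol_g=\int_M(n\psi^2+\lvert A\rvert^2)\,\dvol_g$, and its difference from your expression for $\int_M\Rc(X,X)\,\dvol_g$ is $n(n-2)\int_M\psi^2\,\dvol_g=\frac{n-2}{n}\int_M\Div(X)^2\,\dvol_g$.

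So your route is both more elementary and more general. It needs no Kazdan--Warner-type obstruction and no $m$-QE Bochner formula. It shows the proposition is a classical Bochner--Yano-type rigidity statement for conformal fields on closed manifolds, in which the Yamabe-type equation plays no role. What the paper's route buys is only uniformity with the proof of Theorem~\ref{thm:parallel-conclusion-for-geod-vec-field-on-gen-m-QE-Yam-type}.

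Your dimension caveat is also right. The paper's final step needs $\frac{n-2}{n}\ge 0$, i.e.\ $n\ge 2$, and your circle example shows the statement genuinely fails for $n=1$.
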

	
One can relate Theorem \ref{thm:parallel-conclusion-for-geod-vec-field-on-gen-m-QE-Yam-type} and Proposition \ref{prop:parallel-conclusion-for-conf-vec-field-on-gen-m-QE-Yam-type} with Corollary $5$ in \cite{CarvLimCosta-Filho2026}. For a closed generalized $m$-QE manifold of Yamabe-type, this corollary states that under the following assumption
\begin{align}\label{eq:integration-of-X-lambda-nonpositive-assumption-for-m-QE-Yam-type}
\int_{M} \langle X, \nabla R \rangle \dvol_{g} \leq \int_{M} \langle X, \nabla \rho \rangle \dvol_{g} ~~~ \left(\iff ~ \int_{M} \langle X, \nabla \la \rangle \dvol_{g} \leq 0\right)
\end{align}
the potential vector field $X$ is parallel if one of the following conditions holds:
\begin{enumerate}[~~~(1)]
\item $\int_{M} \Rc(X, X) \dvol_{g} \leq \frac{2}{m} \int_{M} \lvert X \rvert^{2} \Div(X) \dvol_{g}$;
		
\vspace*{1mm}
\item $X$ is conformal and $\int_{M} \Rc(X, X) \dvol_{g} \leq 0$;
		
\vspace*{1mm}
\item $\lvert X \rvert$ is constant and $\int_{M} \Rc(X, X) \dvol_{g} \leq 0$.
\end{enumerate}
As in Proposition \ref{prop:parallel-conclusion-for-conf-vec-field-on-gen-m-QE-Yam-type}, we remark that the assumption \eqref{eq:integration-of-X-lambda-nonpositive-assumption-for-m-QE-Yam-type} is not required for the desired conclusion when $X$ is conformal.

\vspace*{1mm}
Carvalho, Lima and Costa-Filho \cite{CarvLimCosta-Filho2026} proved a rigidity result for closed {\em generalized} $m$-QE manifolds by extending the corresponding result for closed $m$-QE manifolds due to Sharma \cite{Sharma2025} . In the setting of equation \eqref{eq:gen-m-QE-Yam-type}, it can be re-stated as follows:
	
\begin{thm}[{\cite[Theorem A]{CarvLimCosta-Filho2026}}]
Let $(M^n, g, X, \rho)$ be a closed generalized $m$-quasi-Einstein manifold of Yamabe-type in the sense of equation \eqref{eq:gen-m-QE-Yam-type}. Assume that the scalar curvature $R$ of the metric $g$ satisfies
\begin{equation*}
\int_{M} \langle X, \nabla R \rangle \dvol_{g} \leq 0
\end{equation*}
and the following condition holds
\begin{align*}
\int_{M} \langle X, \nabla R \rangle \dvol_{g} \geq \int_{M} \langle X, \nabla \rho \rangle \dvol_{g} ~~~ \left(\iff ~ \int_{M} \langle X, \nabla \la \rangle \dvol_{g} \geq 0\right).
\end{align*}
Then $X$ is a Killing vector field.
\end{thm}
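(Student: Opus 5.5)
The plan is to derive an integral identity expressing $\int_M \lvert \sL_X g\rvert^2$ in terms of $\int_M \langle X,\nabla R\rangle$ and $\int_M\langle X,\nabla\la\rangle$, where $\la = R-\rho$. The two hypotheses then force $\sL_X g\equiv 0$. Only the structure $\Rc + \frac12\sL_X g - \frac1m X^\flat\otimes X^\flat = \la g$ on a closed manifold is used; the specific form $\la = R-\rho$ enters only through how the hypotheses are phrased.

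\emph{Step 1 (contract with $\sL_X g$).} Take the inner product of \eqref{eq:gen-m-QE-Yam-type} with $\frac12\sL_X g$ and integrate over $M$. We have:
\begin{itemize}
\item Since $\Rc$ is symmetric, $\frac12\langle \Rc,\sL_X g\rangle = \langle \Rc,\nabla X\rangle$. Integrating by parts and using the contracted Bianchi identity $\Div \Rc = \frac12 dR$ gives $-\frac12\int_M \langle X,\nabla R\rangle \dvol_g$.
\item The second term gives $\frac14\int_M \lvert\sL_X g\rvert^2 \dvol_g$.
\item For the quadratic term, $\frac12(\sL_X g)(X,X) = \frac12 X(\lvert X\rvert^2)$, which integrates to $-\frac12\int_M \lvert X\rvert^2\Div X \dvol_g$. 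With the factor $-\frac1m$ it contributes $+\frac{1}{2m}\int_M\lvert X\rvert^2\Div X\dvol_g$.
\item The right-hand side is $\int_M \la\,\Div X \dvol_g = -\int_M\langle X,\nabla \la\rangle\dvol_g$.
\end{itemize}
Hence
\begin{equation*}
-\frac12\int_M \langle X,\nabla R\rangle + \frac14\int_M\lvert\sL_X g\rvert^2 + \frac{1}{2m}\int_M \lvert X\rvert^2\Div X = -\int_M\langle X,\nabla\la\rangle .
\end{equation*}

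\emph{Step 2 (eliminate the cubic term via the trace).} The term $\int_M\lvert X\rvert^2\Div X$ has no sign, and handling it is the main obstacle. The trace of \eqref{eq:gen-m-QE-Yam-type} is
\begin{equation*}
R+\Div X - \tfrac1m\lvert X\rvert^2 = n\la .
\end{equation*}
Multiply it by $\Div X$, integrate, and use $\int_M R\,\Div X = -\int_M\langle X,\nabla R\rangle$ and $\int_M \la\,\Div X = -\int_M\langle X,\nabla \la\rangle$. This gives
\begin{equation*}
\frac1m\int_M\lvert X\rvert^2\Div X = -\int_M\langle X,\nabla R\rangle + \int_M (\Div X)^2 + n\int_M\langle X,\nabla\la\rangle .
\end{equation*}

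\emph{Step 3 (conclude).} Substitute the Step 2 identity into Step 1. The $m$-dependence disappears, and we obtain
\begin{equation*}
\frac14\int_M\lvert\sL_X g\rvert^2\dvol_g + \frac12\int_M(\Div X)^2\dvol_g = \int_M\langle X,\nabla R\rangle\dvol_g - \frac{n+2}{2}\int_M\langle X,\nabla \la\rangle\dvol_g .
\end{equation*}
By the hypotheses, $\int_M\langle X,\nabla R\rangle\le 0$ and $\int_M\langle X,\nabla\la\rangle = \int_M\langle X,\nabla R\rangle-\int_M\langle X,\nabla\rho\rangle\ge 0$. So the right-hand side is $\le 0$, while the left-hand side is $\ge 0$. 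Therefore $\sL_X g\equiv 0$, i.e.\ $X$ is Killing, and also $\Div X\equiv 0$ as a byproduct.

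The only delicate points are the sign bookkeeping in the integrations by parts and checking the identity $\frac12(\sL_X g)(X,X)=\frac12 X(\lvert X\rvert^2)$, which holds since $(\sL_X g)(X,X) = 2\langle\nabla_X X,X\rangle$.
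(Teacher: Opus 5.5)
Your proof is correct, but it reaches the key integral identity by a different route from the paper. The paper does not reprove this cited result. However, in the proof of Theorem \ref{thm:Killing-of-gen-m-QE-Yam-type-under-certain-condition}, which it describes as a modification of the original proof of Theorem A, it derives
\[
\frac12\int_M\lvert\sL_X g\rvert^2\dvol_g=-\int_M\Div(X)^2\dvol_g-n\int_M\langle X,\nabla R\rangle\dvol_g+(n+2)\int_M\langle X,\nabla\rho\rangle\dvol_g .
\]
After substituting $\rho=R-\la$, this is exactly twice your Step 3 identity, and the statement follows from it at once.

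The paper gets there in two steps. It integrates the pointwise Bochner-type identity \eqref{eq:Bochner-type-identity-for-gen-m-QE-Yam-type}, obtained by taking the divergence of the equation via the Petersen--Wylie formula, to control $\int_M(\lvert\nabla X\rvert^2-\Rc(X,X))$. It then converts this into $\frac12\int_M\lvert\sL_X g\rvert^2-\int_M\Div(X)^2$ using Yano's formula \eqref{eq:Yano-integral-formula}.

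You instead pair the equation directly with $\frac12\sL_X g$. The Ricci term is then absorbed by the contracted Bianchi identity, so you need neither a Bochner formula nor Yano's formula, and no commutation of covariant derivatives. Your elimination of the cubic term, multiplying the trace by $\Div X$, is the paper's step of pairing the gradient of the trace with $X$, up to one integration by parts.

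Your route is shorter and self-contained, and it makes clear that only the general generalized $m$-QE structure is used. The paper's route has the advantage that its ingredients are pointwise and yield $\int_M\lvert\nabla X\rvert^2$, not just $\int_M\lvert\sL_X g\rvert^2$. The paper reuses exactly this for the parallelism results (Theorem \ref{thm:parallel-conclusion-for-geod-vec-field-on-gen-m-QE-Yam-type} and Proposition \ref{prop:parallel-conclusion-for-conf-vec-field-on-gen-m-QE-Yam-type}) and for the non-compact Theorem \ref{thm:parallel-vec-field-conclusion-gen-mQE-mfd-Yam-type-non-compact-case}.
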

	
We have an analogous result for any generalized $m$-QE manifold of Yamabe-type.
	
\begin{thm}\label{thm:Killing-of-gen-m-QE-Yam-type-under-certain-condition}
Let $(M^n, g, X, \rho)$ be an $n$-dimensional closed generalized $m$-quasi-Einstein manifold of Yamabe-type in the sense of equation \eqref{eq:gen-m-QE-Yam-type} for some $\rho \in C^\infty(M, \RR)$. Then the potential vector field $X$ is Killing if any of the following criterions holds:
\begin{enumerate}[~~~(i)]
\item The scalar curvature $R$ of the metric $g$ satisfies
\begin{equation}
\label{eq:integral-of-X-rho-less-than-some-constant-times-integral-of-X-R}
\int_{M} \langle X, \nabla \rho \rangle \dvol_{g} \leq \frac{n}{n+2} \int_{M} \langle X, \nabla R \rangle \dvol_{g}.
\end{equation}
	
\item The function $\rho$ satisfies the condition \eqref{eq:integration-of-X-rho-nonpositive-assumption} and the scalar curvature $R$ satisfies 
\begin{equation}\label{eq:integral-of-X-R-nonnegative}
\int_{M} \langle X, \nabla R \rangle \dvol_{g} \geq 0.
\end{equation}

\item The potential vector field $X$ is conformal and the condition \eqref{eq:integration-of-X-rho-nonpositive-assumption} holds.
\end{enumerate}
\end{thm}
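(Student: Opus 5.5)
The plan is to prove that the trace-free part of $\frac12\sL_X g$ and $\Div X$ both vanish in $L^2$, via one integral identity obtained by pairing the structure equation \eqref{eq:gen-m-QE-Yam-type} with $\sL_X g$. Write $T := \frac12 \sL_X g$ and $\mathring{T} := T - \frac{\Div X}{n} g$. Tracing \eqref{eq:gen-m-QE-Yam-type} gives
\begin{equation*}
\Div X = (n-1)R - n\rho + \frac{1}{m}\lvert X\rvert^2 .
\end{equation*}
Equation \eqref{eq:gen-m-QE-Yam-type} also says $T = (R-\rho)g - \Rc + \frac1m X^\flat\otimes X^\flat$. Pairing with $\mathring{T}$ kills the $g$-term, so
\begin{equation*}
\int_M \lvert \mathring{T}\rvert^2 = \int_M \Big\langle T, -\mathring{\Rc} + \tfrac1m\big(X^\flat\otimes X^\flat - \tfrac{\lvert X\rvert^2}{n} g\big)\Big\rangle ,
\end{equation*}
where $\mathring{\Rc}$ is the trace-free Ricci tensor.

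Next evaluate each term by integration by parts on the closed manifold. For the Ricci term, use the contracted Bianchi identity $\Div\Rc = \frac12 dR$ to get $\int\langle T,\Rc\rangle = -\frac12\int\langle X,\nabla R\rangle$. Since also $\int \frac{R}{n}\Div X = -\frac1n\int\langle X,\nabla R\rangle$, the Ricci contribution is $\frac{n-2}{2n}\int\langle X,\nabla R\rangle$. For the quadratic term, $T(X,X) = \frac12 X(\lvert X\rvert^2)$ gives $\int T(X,X) = -\frac12\int \lvert X\rvert^2\Div X$. That term therefore equals $-\frac{n+2}{2nm}\int \lvert X\rvert^2 \Div X$.

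Then eliminate $\frac1m\lvert X\rvert^2$ using the trace identity. This gives
\begin{equation*}
\frac1m\int_M\lvert X\rvert^2\Div X = \int_M(\Div X)^2 + (n-1)\int_M\langle X,\nabla R\rangle - n\int_M\langle X,\nabla\rho\rangle .
\end{equation*}
Collecting coefficients, the $\langle X,\nabla R\rangle$ coefficient is $\frac{(n-2)-(n+2)(n-1)}{2n} = -\frac n2$. The key identity is therefore
\begin{equation*}
\int_M \lvert\mathring{T}\rvert^2 \dvol_g + \frac{n+2}{2n}\int_M (\Div X)^2\dvol_g = -\frac n2\int_M\langle X,\nabla R\rangle\dvol_g + \frac{n+2}{2}\int_M\langle X,\nabla\rho\rangle\dvol_g .
\end{equation*}
Under (i), i.e.\ \eqref{eq:integral-of-X-rho-less-than-some-constant-times-integral-of-X-R}, the right-hand side is $\le 0$. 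Under (ii), i.e.\ \eqref{eq:integration-of-X-rho-nonpositive-assumption} together with \eqref{eq:integral-of-X-R-nonnegative}, it is also $\le 0$. In either case $\mathring{T}=0$ and $\Div X=0$, so $T=0$ and $X$ is Killing.

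For (iii) the only extra input needed is $\int_M\langle X,\nabla R\rangle = 0$ for conformal $X$. After that, \eqref{eq:integration-of-X-rho-nonpositive-assumption} forces the right-hand side to be $\le 0$ (with $\mathring{T}=0$ automatically), which gives $\Div X = 0$, and a conformal field with vanishing divergence is Killing. When $n\ge3$, the vanishing of $\int\langle X,\nabla R\rangle$ follows from the computation above. With $T=\psi g$, the Bianchi argument gives $\int\psi R = -\frac12\int\langle X,\nabla R\rangle$, while directly $\int\psi R = \frac1n\int R\Div X = -\frac1n\int\langle X,\nabla R\rangle$. These agree only if $\int\langle X,\nabla R\rangle=0$ when $n\neq 2$.

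I expect the main obstacle to be the case $n=2$, where this argument degenerates. There I would invoke the Kazdan--Warner / Bourguignon--Ezin identity $\int_M X(R)\,\dvol_g=0$, which holds for every conformal vector field on a closed manifold and covers all dimensions uniformly. The remaining care is bookkeeping of signs and coefficients in the collection step, which is routine.
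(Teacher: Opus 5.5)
Your proof is correct. Your key identity is the paper's identity
\[
\frac12\int_M \lvert \sL_X g\rvert^2\dvol_g = -\int_M \Div(X)^2\dvol_g - n\int_M\langle X,\nabla R\rangle\dvol_g + (n+2)\int_M\langle X,\nabla\rho\rangle\dvol_g
\]
divided by $2$, once one writes $\frac12\lvert\sL_X g\rvert^2 = 2\lvert\mathring{T}\rvert^2 + \frac2n\Div(X)^2$. The difference lies in how you reach it.

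The paper integrates the Bochner-type identity \eqref{eq:Bochner-type-identity-for-gen-m-QE-Yam-type} and subtracts Yano's formula \eqref{eq:Yano-integral-formula}, so that $\int_M(\lvert\nabla X\rvert^2-\Rc(X,X))\dvol_g$ cancels. It then eliminates $\frac1m\int_M\lvert X\rvert^2\Div(X)\dvol_g$ with the integrated trace identity, exactly as you do. You instead pair the structure equation with the trace-free part of $\frac12\sL_X g$ and use only $\Div\Rc=\frac12 dR$, never passing through $\lvert\nabla X\rvert^2$ or $\Rc(X,X)$.

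The paper's route is economical within the paper, since the integrated Bochner identity is the same tool behind Theorem~\ref{thm:parallel-conclusion-for-geod-vec-field-on-gen-m-QE-Yam-type} and Proposition~\ref{prop:parallel-conclusion-for-conf-vec-field-on-gen-m-QE-Yam-type}. Your route is shorter and self-contained, and it gives separate control of $\mathring{T}$ and $\Div(X)$. For (iii), it also produces $\int_M X(R)\dvol_g=0$ internally when $n\neq 2$, whereas the paper cites Bourguignon--Ezin in every dimension.

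The case $n=2$, which you flag as the main obstacle, needs no citation at all. On a surface $\Rc=\frac R2 g$, so if $\frac12\sL_X g=\psi g$ the equation \eqref{eq:gen-m-QE-Yam-type} reads $\frac1m X^{\flat}\otimes X^{\flat}=(\psi+\rho-\frac R2)g$. At a point where $X\neq 0$, testing on a unit vector orthogonal to $X$ forces the coefficient to vanish, and then testing on $X$ gives $\frac1m\lvert X\rvert^4=0$. Hence $X\equiv 0$, and criterion (iii) is trivial in dimension two.
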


In \cite{CarvLimCosta-Filho2026} (see also \cite{Ghosh2026}), it was proved that if $\Div(X) = 0$ on a closed generalized $m$-QE manifold (i.e. equation \eqref{eq:gen-m-quasi-Ein} holds), then $X$ must be Killing. The same fact for closed generalized $m$-QE manifold of Yamabe-type can be obtained as a corollary of Theorem \ref{thm:Killing-of-gen-m-QE-Yam-type-under-certain-condition} since the assumption $\Div(X) = 0$ implies that 
\begin{align*}
\int_{M} \langle X, \nabla R \rangle \dvol_{g} = \int_{M} \langle X, \nabla \rho \rangle \dvol_{g} = 0.
\end{align*}
Moreover, under the condition $\Div(X) = 0$ the scalar curvature $R$ of the metric $g$ is constant if and only if the function $\la$ is constant (see Theorem C in \cite{CarvLimCosta-Filho2026}).

\vspace*{1mm}
It was proved in \cite[Corollary E]{CarvLimCosta-Filho2026} that if $m < 0$ and the function $\la \leq 0$, then any divergence-free potential vector field $X$ on a closed generalized $m$-QE manifold $(M, g, X, \la)$ must be identically zero. In the setting of equation \eqref{eq:gen-m-QE-Yam-type}, this fact can be generalized to the following result.
	
\begin{prop}\label{prop:div-free-plus-upper-bound-on-rho-implies-triviality}
Let $(M^n, g, X, \rho)$ be a closed generalized $m$-quasi-Einstein manifold of Yamabe-type (i.e. equation \eqref{eq:gen-m-QE-Yam-type} holds) of dimension $n \geq 2$. Assume that $\Div(X) = 0$ and the smooth function $\rho$ satisfies the following inequality
\begin{equation}\label{eq:upper-bound-assumption-for-rho}
\rho <  \frac{2 - n}{m} \lvert X \rvert^{2}.
\end{equation}
Then the potential vector field $X$ vanishes identically on $M$. In particular, if $m < 0$ and $\rho \leq 0$ for $n\geq 3$ (or $\rho < 0$ for $n =2$), then $X \equiv 0$ provided $X$ is divergence-free.
\end{prop}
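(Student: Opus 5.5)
The plan is to show first that $X$ is Killing, using $\Div(X)=0$ and an integral identity, and then to apply Bochner's formula for Killing fields together with the pointwise bound \eqref{eq:upper-bound-assumption-for-rho}. Throughout, set $S := \frac12 \sL_X g$ and $\la := R-\rho$.

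\emph{Step 1 (trace).} Tracing \eqref{eq:gen-m-QE-Yam-type} gives $R + \Div(X) - \frac1m |X|^2 = n(R-\rho)$. With $\Div(X)=0$ this becomes $(n-1)R = n\rho - \frac1m|X|^2$. Consequently
\begin{equation*}
\la = R-\rho = \frac{1}{n-1}\Big(\rho - \frac1m |X|^2\Big),
\end{equation*}
which is valid since $n\ge 2$.

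\emph{Step 2 ($X$ is Killing).} Take the inner product of \eqref{eq:gen-m-QE-Yam-type} with $S$ and integrate over $M$. Each term except $\int_M |S|^2$ vanishes, for the following reasons.
\begin{itemize}
\item Since $\Rc$ is symmetric, $\langle \Rc, S\rangle = \langle \Rc, \nabla X\rangle$. Integrating by parts and using the contracted Bianchi identity gives $\int_M \langle \Rc,\nabla X\rangle = -\int_M \langle \Div \Rc, X\rangle = -\frac12\int_M \langle X,\nabla R\rangle = \frac12\int_M R\,\Div(X) = 0$.
\item We have $S(X,X) = \langle \nabla_X X, X\rangle = \frac12 X(|X|^2)$, so $\int_M S(X,X) = -\frac12\int_M |X|^2\Div(X) = 0$.
\item We have $\langle \la g, S\rangle = \la\,\Div(X) = 0$.
\end{itemize}
Hence $\int_M |S|^2\,\dvol_g = 0$, so $S\equiv 0$ and $X$ is Killing.

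\emph{Step 3 (Bochner).} Since $X$ is Killing, equation \eqref{eq:gen-m-QE-Yam-type} reduces to $\Rc = \frac1m X^\flat\otimes X^\flat + \la g$. Therefore $\Rc(X,X) = \la |X|^2 + \frac1m|X|^4$. Substituting $\la$ from Step 1 gives
\begin{equation*}
\Rc(X,X) = \frac{|X|^2}{n-1}\Big(\rho + \frac{n-2}{m}|X|^2\Big).
\end{equation*}
Hypothesis \eqref{eq:upper-bound-assumption-for-rho} says exactly that the bracket is negative. Hence $\Rc(X,X)\le 0$ everywhere, with equality precisely where $X=0$.

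For a Killing field, Bochner's formula reads $\frac12\Delta|X|^2 = |\nabla X|^2 - \Rc(X,X)$. Integrating over the closed manifold $M$ yields
\begin{equation*}
0 = \int_M |\nabla X|^2 - \int_M \Rc(X,X) \ge -\int_M \Rc(X,X) \ge 0.
\end{equation*}
So $\Rc(X,X)\equiv 0$, and therefore $X\equiv 0$.

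\emph{The particular case.} Here only the pointwise statement is needed: $|X|^2\big(\rho + \frac{n-2}{m}|X|^2\big) \le 0$, with equality only where $X=0$.
\begin{itemize}
\item If $m<0$ and $n\ge 3$, then $\frac{n-2}{m}|X|^2<0$ wherever $X\neq 0$. Together with $\rho\le 0$, the bracket is negative on the set where $X\neq 0$, and that suffices for the argument above.
\item If $n=2$, the bracket is just $\rho$, so $\rho<0$ is exactly what is needed.
\end{itemize}

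The main obstacle is Step 2, namely recognising that each term vanishes after integration against $S$. The key point is the cancellation $\int_M\langle \Rc,\nabla X\rangle = \frac12\int_M R\,\Div(X)$, which uses the contracted Bianchi identity $\Div\Rc = \frac12\nabla R$. The remaining steps are algebra.
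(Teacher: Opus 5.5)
Your proof is correct. It reaches the same final identity as the paper,
\[
\int_M \lvert\nabla X\rvert^2\,\dvol_g=\frac{1}{n-1}\int_M\Big(\rho+\frac{n-2}{m}\lvert X\rvert^2\Big)\lvert X\rvert^2\,\dvol_g ,
\]
but by a somewhat different route.

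The paper never shows that $X$ is Killing. It integrates the $(X,X)$-evaluation \eqref{eq:evaluating-gen-m-QE-Yam-type-on-the-pair-X-X}, where $\int_M X(\lvert X\rvert^2)$ drops out because $\Div(X)=0$, to obtain $\int_M\Rc(X,X)$ in integrated form. It then integrates the Ghosh Bochner-type identity \eqref{eq:Bochner-type-identity-for-gen-m-QE-Yam-type}, where the $\frac{2}{m}\lvert X\rvert^2\Div(X)$ and $\langle X,\nabla(R-\rho)\rangle$ terms vanish after integration, to get $\int_M\lvert\nabla X\rvert^2=\int_M\Rc(X,X)$.

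You instead first prove $\sL_X g=0$ by pairing the equation with $S$. This is the known fact, mentioned in the paper's introduction, that divergence-free potential fields on closed generalized $m$-QE manifolds are Killing. Having it upgrades the formula for $\Rc(X,X)$ from an integrated statement to a pointwise one. It also lets you use only the classical Bochner formula for Killing fields plus the contracted Bianchi identity, instead of the generalized identity.

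The trade-off: the paper's argument is shorter once the Ghosh identity is in hand, while yours is self-contained and yields the extra conclusion that $X$ is Killing along the way.

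Your treatment of the ``in particular'' clause is also more careful than the paper's. When $m<0$, $n\ge 3$ and $\rho\le 0$, the strict inequality \eqref{eq:upper-bound-assumption-for-rho} can fail at zeros of $X$ where $\rho=0$. So that clause is not literally a special case of the main hypothesis. As you observe, all that is actually needed is that $\big(\rho+\frac{n-2}{m}\lvert X\rvert^2\big)\lvert X\rvert^2\le 0$, with equality only where $X=0$. This condition is exactly what makes the paper's integral argument go through as well.
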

	
As a consequence of above proposition, we obtain the following corollary.
	
\begin{cor}\label{cor:div-free-plus-scal-curv-nonpositive-implies-triviality}
Let $(M^n, g, X, \rho)$ be a closed generalized $m$-quasi-Einstein manifold of Yamabe-type (i.e. equation \eqref{eq:gen-m-QE-Yam-type} holds) of dimension $n \geq 2$. Assume that $\Div(X) = 0$. Also, suppose that one of the following conditions holds:
\begin{enumerate}[~~~(i)]
\item The constant $m < 0$ and the scalar curvature $R$ is non-positive;
			
\vspace*{1mm}
\item The constant $m < 0$ and $R \leq \rho$ (i.e. $\la \leq 0$).
\end{enumerate}
Then the potential vector field $X$ vanishes identically on $M$. 
\end{cor}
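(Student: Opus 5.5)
The plan is to go through the Killing property of $X$ and a Bochner-type integral identity, rather than invoking Proposition \ref{prop:div-free-plus-upper-bound-on-rho-implies-triviality} directly. The reason is that the strict inequality \eqref{eq:upper-bound-assumption-for-rho} may fail at points where both $X$ and $R$ vanish.

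\emph{Step 1: $X$ is Killing.} Since $\Div(X)=0$ on a closed manifold, integration by parts gives
\begin{equation*}
\int_M \langle X,\nabla R\rangle\,\dvol_g=\int_M\langle X,\nabla\rho\rangle\,\dvol_g=0 .
\end{equation*}
Hence criterion (i) of Theorem \ref{thm:Killing-of-gen-m-QE-Yam-type-under-certain-condition} holds, and $X$ is a Killing vector field, so $\sL_X g=0$.

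\emph{Step 2: the pointwise identities.} With $\sL_X g=0$, equation \eqref{eq:gen-m-QE-Yam-type} becomes
\begin{equation*}
\Rc=(R-\rho)g+\tfrac1m X^\flat\otimes X^\flat .
\end{equation*}
Tracing this (or tracing \eqref{eq:gen-m-QE-Yam-type} using $\Div X=0$) gives
\begin{equation*}
R=n(R-\rho)+\tfrac1m|X|^2, \qquad\text{so}\qquad \la=R-\rho=\tfrac1n\bigl(R-\tfrac1m|X|^2\bigr).
\end{equation*}
Evaluating the Ricci equation on $X$ gives
\begin{equation*}
\Rc(X,X)=\la|X|^2+\tfrac1m|X|^4 .
\end{equation*}

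\emph{Step 3: the Bochner formula.} For a Killing field on a closed manifold,
\begin{equation*}
\int_M\Rc(X,X)\,\dvol_g=\int_M|\nabla X|^2\,\dvol_g
\end{equation*}
(integrate $\tfrac12\Delta|X|^2=|\nabla X|^2-\Rc(X,X)$). Combining this with Step 2 yields
\begin{equation*}
\int_M|\nabla X|^2\,\dvol_g=\int_M \la|X|^2\,\dvol_g+\frac1m\int_M|X|^4\,\dvol_g .
\end{equation*}

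\emph{Step 4: the two cases.} In case (ii) we have $\la\le0$ and $m<0$, so the right-hand side is $\le \frac1m\int_M|X|^4\,\dvol_g\le 0$. Since the left-hand side is nonnegative, $\int_M|X|^4\,\dvol_g=0$, hence $X\equiv0$. In case (i), substitute the expression for $\la$ from Step 2:
\begin{equation*}
\int_M|\nabla X|^2\,\dvol_g=\frac1n\int_M R|X|^2\,\dvol_g+\frac{n-1}{nm}\int_M|X|^4\,\dvol_g .
\end{equation*}
With $R\le0$, $m<0$ and $n\ge2$, both terms on the right are nonpositive. The second term has coefficient $\frac{n-1}{nm}<0$, which forces $\int_M|X|^4\,\dvol_g=0$, so again $X\equiv0$.

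\emph{Main obstacle.} The only delicate point is the strictness required in Proposition \ref{prop:div-free-plus-upper-bound-on-rho-implies-triviality}. By Step 2, $\rho=\frac1n\bigl((n-1)R+\frac1m|X|^2\bigr)\le0\le\frac{2-n}{m}|X|^2$, so that proposition applies verbatim wherever the inequality is strict. The direct integral argument above avoids having to check strictness at points where $X=R=0$.
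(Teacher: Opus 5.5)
Your proof is correct. It arrives at exactly the identity behind the paper's Proposition \ref{prop:div-free-plus-upper-bound-on-rho-implies-triviality}, namely $\int_M|\nabla X|^2\,\dvol_g=\int_M\bigl(\tfrac1m|X|^2+R-\rho\bigr)|X|^2\,\dvol_g$, but differs from the paper in two respects.

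First, the paper reaches this identity without Theorem \ref{thm:Killing-of-gen-m-QE-Yam-type-under-certain-condition}. It integrates \eqref{eq:evaluating-gen-m-QE-Yam-type-on-the-pair-X-X}, where the term $\frac12X(|X|^2)$ integrates to $-\frac12\int_M|X|^2\Div(X)\,\dvol_g=0$. It also integrates \eqref{eq:Bochner-type-identity-for-gen-m-QE-Yam-type} with $\Div(X)=0$. Your Killing step is valid, but it only upgrades these integrated identities to pointwise ones, and it costs an appeal to Yano's formula.

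Second, the paper rewrites the integrand as $\frac{1}{n-1}\bigl(\frac{n-2}{m}|X|^2+\rho\bigr)|X|^2$ via \eqref{eq:identity-taking-trace-gen-mQE-Yam-type-with-div-zero} and then appeals to the strict bound \eqref{eq:upper-bound-assumption-for-rho}. You instead read off the sign directly from $\la\le0$ or $R\le0$. Your worry about strictness is justified. For example, $X\equiv0$ on a flat torus satisfies the corollary's hypotheses with $R=\rho=0$, yet \eqref{eq:upper-bound-assumption-for-rho} reads $0<0$. So the paper's one-line deduction does not apply the proposition verbatim.

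The repair on the paper's side is easy, because the proposition's proof only uses strictness on $\{X\neq0\}$. There one has $\rho-\frac{2-n}{m}|X|^2\le\frac{(n-1)^2}{nm}|X|^2<0$ in case (i), and $\rho-\frac{2-n}{m}|X|^2\le\frac{n-1}{m}|X|^2<0$ in case (ii). Your direct sign argument simply makes this bookkeeping unnecessary.
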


\vspace*{1mm}
For each $p \geq 1$, we use the notation $L^{p}(M, g)$ (or simply $L^{p}(M)$) to denote the space of functions $f: M \rightarrow \RR$ on the Riemannian manifold $(M, g)$ such that $$\int_{M} \vert f \vert^{p} \dvol_{g} < + \infty.$$ We have the following results when the manifold is non-compact.
	
\begin{thm}\label{thm:triviality-of-gen-m-QE-mfd-complete-noncompact-case}
Let $(M, g, X, \rho)$ be a complete, non-compact generalized $m$-quasi-Einstein manifold of Yamabe-type in the sense of equation \eqref{eq:gen-m-QE-Yam-type} of dimension $n \geq 3$. Assume that the potential vector field $X$ satisfies $\lvert X \rvert \in L^1(M)$ and the following condition holds:
\begin{equation}\label{eq:condition-implies-X-is-zero-complete-noncompact-case}
m \Big((n-1) R - n \rho \Big) \geq 0.
\end{equation}
Then $X$ is identically zero on $M$, and hence $(M, g)$ is an Einstein manifold.
\end{thm}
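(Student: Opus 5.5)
The plan is to take the trace of \eqref{eq:gen-m-QE-Yam-type}. This turns the hypothesis \eqref{eq:condition-implies-X-is-zero-complete-noncompact-case} into a sign condition on $\Div(X)$, and then an integrability argument on complete manifolds forces $\Div(X)=0$. Since $\mathrm{tr}_g(\tfrac12\sL_X g)=\Div(X)$ and $\mathrm{tr}_g(X^\flat\otimes X^\flat)=|X|^2$, tracing gives
\begin{equation*}
R + \Div(X) - \frac{1}{m}|X|^2 = n(R-\rho), \qquad\text{i.e.}\qquad \Div(X) = (n-1)R - n\rho + \frac{1}{m}|X|^2 .
\end{equation*}
Multiplying by $m$ yields
\begin{equation*}
m\,\Div(X) = m\big((n-1)R - n\rho\big) + |X|^2 \geq |X|^2 \geq 0
\end{equation*}
by \eqref{eq:condition-implies-X-is-zero-complete-noncompact-case}. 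Thus $\Div(X)$ has a fixed sign on $M$: it is $\geq 0$ if $m>0$ and $\leq 0$ if $m<0$.

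The key step, and the main obstacle, is to show that a smooth vector field on a complete non-compact manifold with $|X|\in L^1(M)$ and sign-definite divergence must be divergence-free. This is the Caminha--Souza--Camargo extension of the Yau/Karp divergence lemma. I would either quote it or prove it directly with cutoffs.

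For the direct proof, fix $o\in M$. Take Lipschitz cutoffs $\varphi_k$ with $\varphi_k=1$ on $B_k(o)$, $\varphi_k=0$ outside $B_{2k}(o)$, $0\le\varphi_k\le1$ and $|\nabla\varphi_k|\le 1/k$. Completeness guarantees that these balls are relatively compact, so the $\varphi_k$ have compact support. Integrating by parts gives
\begin{equation*}
\Big|\int_M \varphi_k\,\Div(X)\,\dvol_g\Big| = \Big|\int_M \langle \nabla\varphi_k, X\rangle\,\dvol_g\Big| \le \frac1k\int_M |X|\,\dvol_g \longrightarrow 0 .
\end{equation*}
Since $\varphi_k\,\Div(X)$ has a fixed sign and increases in absolute value to $|\Div(X)|$ pointwise, monotone convergence yields $\int_M |\Div(X)|\,\dvol_g = 0$. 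Hence $\Div(X)\equiv 0$.

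Feeding $\Div(X)=0$ back into the traced identity gives
\begin{equation*}
|X|^2 = -\,m\big((n-1)R - n\rho\big) \le 0,
\end{equation*}
again by \eqref{eq:condition-implies-X-is-zero-complete-noncompact-case}. Therefore $X\equiv 0$ on $M$. The equation \eqref{eq:gen-m-QE-Yam-type} then reduces to $\Rc=(R-\rho)g$. Since $n\ge 3$, Schur's lemma (as recalled in the introduction) shows that $R-\rho$ is constant, so $(M,g)$ is Einstein.
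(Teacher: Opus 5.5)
Your proposal is correct and matches the paper's proof step for step: the traced equation gives $m\,\Div(X)\geq |X|^2\geq 0$, Caminha's divergence lemma (which the paper simply quotes) forces $\Div(X)=0$, and then $|X|^2=-m\big((n-1)R-n\rho\big)\leq 0$. Your self-contained cutoff proof of that lemma is fine, but the monotone-convergence step needs the cutoffs chosen increasing in $k$ (e.g.\ the standard radial ones); alternatively, Fatou's lemma, or restricting to a fixed ball $B_{k_0}(o)$, avoids that requirement.
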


\begin{thm}\label{thm:parallel-vec-field-conclusion-gen-mQE-mfd-Yam-type-non-compact-case}
Let $(M, g, X, \rho)$ be a complete, non-compact generalized $m$-quasi-Einstein manifold of Yamabe type in the sense of equation \eqref{eq:gen-m-QE-Yam-type} of dimension $n \geq 3$. Assume that the smooth function $\rho$ satisfies the following inequality
\begin{equation}\label{eq:condition-to-guarantee-div-of-V-is-nonnegative}
(n-2) \langle X, \nabla \rho \rangle \leq (1 - n)\Rc(X, X) - \frac{n}{m} X(\vert X \vert^{2}),
\end{equation}
and that the potential vector field $X$ satisfies any of the following conditions:
\begin{enumerate}[~~~(i)]
\item $\vert X \vert \in L^3(M)$, $\Div(X) \in L^{3/2}(M)$ and $\vert \nabla\vert X \vert \vert \in L^{3/2}(M)$;
		
\vspace*{1mm}
\item $\vert X \vert \in L^{3}(M) \cap W^{1,2}(M)$ and $\Div(X) \in L^{2}(M)$.
\end{enumerate}
Then $X$ is a parallel vector field.
\end{thm}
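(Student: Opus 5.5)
The plan is to build a vector field $V$ on $M$ with three properties. Its divergence should be a sum of non-negative terms controlling $\lvert \nabla X \rvert^{2}$ plus a multiple of the quantity $(1-n)\Rc(X,X) - \frac{n}{m}X(\lvert X\rvert^{2}) - (n-2)\langle X, \nabla\rho\rangle$, which \eqref{eq:condition-to-guarantee-div-of-V-is-nonnegative} makes non-negative. It should be assembled only from $\lvert X\rvert^{2}X$, $\Div(X)\,X$ and $\lvert X\rvert\nabla\lvert X\rvert = \frac{1}{2}\nabla\lvert X\rvert^{2}$, so that (i) or (ii) gives $\lvert V\rvert\in L^{1}(M)$. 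We then invoke the Karp/Yau-type divergence lemma on complete manifolds: if $\lvert V\rvert \in L^1(M)$ and $\Div V \geq 0$, then $\Div V \equiv 0$. This should force $\nabla X \equiv 0$.

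\textbf{Pointwise identities.} Tracing \eqref{eq:gen-m-QE-Yam-type} gives
\[
\Div(X) = (n-1)R - n\rho + \tfrac{1}{m}\lvert X\rvert^{2}.
\]
Write $\nabla X = S + A$ with $S = \frac12\sL_X g = (R-\rho)g - \Rc + \frac1m X^\flat\otimes X^\flat$ symmetric and $A$ skew, so that $\lvert\nabla X\rvert^{2} = \lvert S\rvert^{2} + \lvert A\rvert^{2}$. The contracted Bianchi identity $\Div\Rc = \frac12 dR$ gives
\[
\Div S = d\Big(\tfrac12 R - \rho\Big) + \tfrac1m\big(\Div(X)\,X^\flat + (\nabla_X X)^\flat\big),
\]
and hence
\[
\Div\big(S(X)\big) = \lvert S\rvert^{2} + X\Big(\tfrac12 R - \rho\Big) + \tfrac1m\Div(X)\lvert X\rvert^{2} + \tfrac1{2m}X(\lvert X\rvert^{2}).
\]
The Bochner-type identity is
\[
\Div\big(\nabla_X X - \Div(X) X\big) = \Rc(X,X) + \lvert S\rvert^{2} - \lvert A\rvert^{2} - (\Div X)^{2}.
\]
Finally, $\nabla_X X = S(X) + A(X)$ and $\frac12\nabla\lvert X\rvert^{2} = S(X) - A(X)$, so $\nabla_X X$ and $A(X)$ can be traded for $S(X)$ and $\frac12\nabla\lvert X\rvert^{2}$. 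We also have $S(X) = (R-\rho)X - \Rc(X) + \frac1m\lvert X\rvert^{2}X$.

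\textbf{Choice of $V$.} Take a linear combination
\[
V = a\,\nabla\lvert X\rvert^{2} + b\,\Div(X)\,X + c\,\lvert X\rvert^{2}X
\]
(possibly with an $S(X)$ term rewritten as above). Choose the constants $a,b,c$ so that three things happen in $\Div V$. The $X(R)$ terms, expressed through $X(\Div X)$ via the traced equation, should cancel. The $\Div(X)\lvert X\rvert^{2}$ terms should cancel. What remains should be
\[
\Div V = \lvert\nabla X\rvert^{2} + (\Div X)^{2} + \big[(1-n)\Rc(X,X) - \tfrac nm X(\lvert X\rvert^{2}) - (n-2)\langle X,\nabla\rho\rangle\big],
\]
up to positive constant factors. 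The precise combination of $n-1$, $n$ and $n-2$ in \eqref{eq:condition-to-guarantee-div-of-V-is-nonnegative} is exactly what this cancellation should produce, and the combination I try first is $V = \nabla_X X - \Div(X)X$ corrected by multiples of $S(X)$ and $\nabla\lvert X\rvert^{2}$. I expect this bookkeeping, getting all the indefinite cross terms to cancel while keeping only $\nabla\lvert X\rvert^{2}$, $\Div(X)X$ and $\lvert X\rvert^{2}X$ in $V$, to be the main obstacle. If a residual term survives, I would absorb it using $(\Div X)^{2}\le n\lvert S\rvert^{2}$.

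\textbf{Integrability and conclusion.} We have $\lvert V\rvert \lesssim \lvert X\rvert\,\lvert\nabla\lvert X\rvert\rvert + \lvert X\rvert\,\lvert\Div X\rvert + \lvert X\rvert^{3}$. Under (i), Hölder's inequality with exponents $3$ and $3/2$ bounds the first two terms in $L^1$, and $\lvert X\rvert^{3}\in L^1$ holds because $\lvert X\rvert\in L^3$. Under (ii), $\lvert X\rvert,\ \nabla\lvert X\rvert,\ \Div X \in L^2$ give the first two terms in $L^1$ by Cauchy--Schwarz, and again $\lvert X\rvert^3\in L^1$. The divergence lemma then yields $\Div V\equiv 0$. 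Since each summand of $\Div V$ is non-negative, we get $\nabla X\equiv 0$, i.e. $X$ is parallel.
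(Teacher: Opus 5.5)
Your plan is the paper's proof. The paper takes $V=(n-2)\Div(X)\,X+(n-1)\lvert X\rvert\nabla\lvert X\rvert-\frac{2(n-1)}{m}\lvert X\rvert^{2}X$, which is your ansatz with $a=\frac{n-1}{2}$, $b=n-2$, $c=-\frac{2(n-1)}{m}$. It then uses the same H\"older estimates and Caminha's $L^{1}$ divergence lemma.

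The step you leave as ``bookkeeping'' is the actual content of the proof, and your proposal only conjectures it. It does close exactly, with no residual term, so the $(\Div X)^{2}\le n\lvert S\rvert^{2}$ fallback is not needed. Subtracting $2\Div(S(X))$ from your identity for $\Div(\nabla_XX-\Div(X)X)$ gives
\begin{align*}
\Div\Big(\tfrac12\nabla\lvert X\rvert^{2}+\Div(X)X\Big)&=\lvert\nabla X\rvert^{2}+(\Div X)^{2}-\Rc(X,X)+X(R)-2X(\rho)\\
&\quad+\tfrac{2}{m}\lvert X\rvert^{2}\Div(X)+\tfrac1m X(\lvert X\rvert^{2}).
\end{align*}
Now multiply by $n-1$ and make two substitutions:
\begin{itemize}
\item replace $(n-1)X(R)$ by $\Div(\Div(X)X)-(\Div X)^{2}+nX(\rho)-\frac1m X(\lvert X\rvert^{2})$, which is the differentiated traced equation;
\item replace $\lvert X\rvert^{2}\Div(X)$ by $\Div(\lvert X\rvert^{2}X)-X(\lvert X\rvert^{2})$.
\end{itemize}
This gives
\begin{align*}
\Div V&=(n-1)\lvert\nabla X\rvert^{2}+(n-2)(\Div X)^{2}\\
&\quad+\Big[(1-n)\Rc(X,X)-\tfrac{n}{m}X(\lvert X\rvert^{2})-(n-2)\langle X,\nabla\rho\rangle\Big],
\end{align*}
which is exactly the paper's identity. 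With it in place, the rest of your argument goes through as written.
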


Here, the Sobolev (or Hilbert) space $W^{1,2}(M)$ is the completion of the vector space of smooth real-valued functions $f$ such that $f\in L^2(M, g)$ and $\vert \nabla f \vert_{g} \in L^2(M, g)$ with respect to the norm
$$ \Vert f \Vert_{W^{1,2}(M)} := \left(\Vert f \Vert_{L^{2}(M)}^{2} + \Vert \nabla f \Vert_{L^{2}(M)}^{2} \right)^{1/2}. $$

\vspace*{1mm}
It is worth noting that the study of generalized $m$-QE manifolds is significantly more challenging in non-compact settings than in the compact setting. It will be interesting if one finds analogous criterions as in \cite{HwangYun2025arXiv, Mete2026-almost-Yam} (for almost Yamabe solitons) to prove rigidity results on complete, non-compact generalized $m$-QE manifolds of Yamabe-type.

\vspace*{1mm}
\textbf{Brief outline of the paper:} In Section \ref{section:preliminaries}, we first present a key lemma about conformal vector fields on any closed Riemannian manifold and another key lemma about geodesic vector fields on closed generalized $m$-quasi-Einstein ($m$-QE) manifolds. After that in Section \ref{section:preliminaries}, we recall several basic properties and identities for generalized $m$-QE manifolds and we also recall Yano's integral formula. The Section \ref{subsec:proof of main results_compact case} contains the proofs of our main results from Section \ref{sec:introduction} in the compact setting; while Section \ref{subsec:proof of main results_non-compact case} is devoted to proving our results in the non-compact case.

\vspace*{2mm}
\section{Preliminaries}
\label{section:preliminaries}
	
We begin with the following observation for any conformal vector field on a closed Riemannian manifold (which need not to be a $m$-quasi-Einstein manifold).

\begin{lem}\label{lem:integration-of-mod-X-square-Div-X-is-zero-for-conformal-vec-field}
Let $(M^n, g)$ be a closed Riemannian manifold of dimension $n$ and $Y \in \mathfrak{X}(M)$ a smooth vector field. If $\lvert Y \rvert$ is constant or $Y$ is conformal, then we have
\begin{align*}
\int_M \lvert Y \rvert^{2} \Div(Y) \dvol_{g} = 0.
\end{align*}
\end{lem}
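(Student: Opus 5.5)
The plan is to use the divergence theorem on $M$, which is closed, together with the identity
\[
\Div\big(\lvert Y \rvert^{2} Y\big) = \lvert Y \rvert^{2}\Div(Y) + Y\big(\lvert Y \rvert^{2}\big).
\]
Integrating over $M$ kills the left-hand side, so
\[
\int_M \lvert Y \rvert^{2}\Div(Y)\,\dvol_{g} = -\int_M Y(\lvert Y \rvert^{2})\,\dvol_{g}.
\]
The task then reduces to showing that $\int_M Y(\lvert Y \rvert^{2})\,\dvol_g = 0$ in each of the two cases.

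\emph{Case 1: $\lvert Y \rvert$ is constant.} This case is immediate. The function $\lvert Y \rvert^{2}$ is constant, so $Y(\lvert Y \rvert^{2}) = 0$ pointwise. Equivalently, $\lvert Y \rvert^{2}$ factors out of the integral and $\int_M \Div(Y)\,\dvol_g = 0$ by the divergence theorem.

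\emph{Case 2: $Y$ is conformal,} so $\frac{1}{2}\sL_Y g = \psi g$. Taking the trace gives $\Div(Y) = n\psi$. Next, $Y(\lvert Y \rvert^{2}) = 2\langle \nabla_Y Y, Y\rangle = (\sL_Y g)(Y,Y) - 2\langle\nabla_Y Y, Y\rangle + 2\langle \nabla_Y Y, Y\rangle$; more directly,
\[
(\sL_Y g)(Y,Y) = 2\langle \nabla_Y Y, Y\rangle = Y(\lvert Y\rvert^{2}).
\]
The conformal equation then gives $Y(\lvert Y \rvert^{2}) = 2\psi\lvert Y \rvert^{2} = \frac{2}{n}\lvert Y \rvert^{2}\Div(Y)$. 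Substituting into the integrated identity yields
\[
\Big(1 + \frac{2}{n}\Big)\int_M \lvert Y \rvert^{2}\Div(Y)\,\dvol_g = 0.
\]
Since $1 + 2/n \neq 0$, the integral vanishes.

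No step here is a real obstacle. The only point needing care is the identity $(\sL_Y g)(Y,Y) = 2\langle\nabla_Y Y, Y\rangle$ and the trace computation $\Div(Y) = n\psi$, both of which are standard.
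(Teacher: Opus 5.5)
Your proposal is correct and follows essentially the same route as the paper. The paper also combines $\Div(Y)=n\psi$ with $Y(\lvert Y\rvert^{2}) = (\sL_Y g)(Y,Y) = 2\psi\lvert Y\rvert^{2}$ and the divergence identity for $\lvert Y\rvert^{2}Y$ to get $(1+\frac{2}{n})\lvert Y\rvert^{2}\Div(Y) = \Div(\lvert Y\rvert^{2}Y)$. Your add-and-subtract line in Case 2 is a harmless tautology and can be dropped.
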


\begin{proof}
When the $\lvert Y \rvert$ is constant the desired conclusion follows from the divergence theorem since $M$ is closed. Now, suppose $Y$ is conformal. Then \eqref{eq:conformal-vector-field} holds for some $\psi \in C^{\infty}(M, \RR)$, and in particular, taking trace we obtain
\begin{equation}\label{eq:trace-of-conform-vec-field}
n \psi = \Div(Y).
\end{equation}
On the other hand, evaluating \eqref{eq:conformal-vector-field} at the pair $(Y, Y)$ it yields
\begin{equation*}
\langle \nabla_{Y}Y, Y \rangle = \frac{1}{2}(\sL_{Y} g)(Y, Y) = \psi \lvert Y \rvert^{2}.
\end{equation*}
But, using \eqref{eq:trace-of-conform-vec-field} and the metric compatibility property of $\nabla$ we get
$$ \frac{1}{n} \lvert Y \rvert^{2} \Div(Y) = \langle \nabla_{Y}Y, Y \rangle = \frac{1}{2} Y (\lvert Y \rvert^{2}).$$
Now, using the following divergence formula $$\Div(\lvert Y \rvert^{2} Y) = \lvert Y \rvert^{2} \Div(Y) + \langle \nabla \lvert Y \rvert^{2}, Y \rangle = \lvert Y \rvert^{2} \Div(Y) + Y (\lvert Y \rvert^{2})$$ and simplifying we obtain $$ (\frac{2}{n} + 1) \lvert Y \rvert^{2} \Div(Y) = \Div(\lvert Y \rvert^{2} Y).$$
Hence, applying the divergence theorem we get the desired result.
\end{proof}
	
Next, we see that same conclusion holds for any geodesic vector field on a closed generalized $m$-QE manifold.
	
\begin{lem}\label{lem:integration-of-mod-X-square-Div-X-is-zero-for-geod-vec-field}
Let $(M, g, X, \la)$ be a closed generalized $m$-QE manifold (i.e. the equation \eqref{eq:gen-m-quasi-Ein} holds) with a geodesic vector field $X$. Then 
\begin{align*}
\int_M \lvert X \rvert^{2} \Div(X) \dvol_{g} = 0.
\end{align*}
\end{lem}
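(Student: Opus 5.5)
The plan is to mimic the proof of Lemma \ref{lem:integration-of-mod-X-square-Div-X-is-zero-for-conformal-vec-field}. The goal is to show that $\lvert X \rvert^{2}\Div(X)$ is itself a divergence, and then integrate over the closed manifold $M$.

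The key observation is that the geodesic condition forces $\lvert X \rvert^{2}$ to be constant along the integral curves of $X$. By metric compatibility of $\nabla$,
\begin{equation*}
X(\lvert X \rvert^{2}) = 2\langle \nabla_{X}X, X \rangle = 0,
\end{equation*}
since $\nabla_{X}X = 0$. Equivalently, this is $\frac{1}{2}(\sL_{X}g)(X,X) = \langle \nabla_{X} X, X\rangle = 0$.

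Next we use the divergence formula already employed in the proof of Lemma \ref{lem:integration-of-mod-X-square-Div-X-is-zero-for-conformal-vec-field}:
\begin{equation*}
\Div(\lvert X \rvert^{2} X) = \lvert X \rvert^{2}\Div(X) + X(\lvert X \rvert^{2}).
\end{equation*}
The last term vanishes by the previous step, so we obtain the pointwise identity $\lvert X \rvert^{2}\Div(X) = \Div(\lvert X \rvert^{2}X)$ on $M$. Integrating and applying the divergence theorem on the closed manifold $M$ gives the claimed vanishing of $\int_{M}\lvert X \rvert^{2}\Div(X)\,\dvol_{g}$.

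There is no real obstacle here. The only point worth flagging is that equation \eqref{eq:gen-m-quasi-Ein} is not actually needed: the identity $X(\lvert X\rvert^{2})=0$ uses only $\nabla_X X=0$. If one prefers to see the equation enter, one can evaluate \eqref{eq:gen-m-quasi-Ein} at $(X,X)$. This gives $\Rc(X,X) + \frac{1}{2}X(\lvert X\rvert^{2}) - \frac{1}{m}\lvert X\rvert^{4} = \la\lvert X\rvert^{2}$, which is consistent with, but not needed for, the argument.
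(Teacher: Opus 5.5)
Your proof is correct, and it takes a shorter route than the paper. The paper follows the proof of Theorem F in the work of Carvalho, Lima and Costa-Filho. It uses equation \eqref{eq:gen-m-quasi-Ein} together with $\nabla_X X = 0$ to derive $\frac{1}{2}\nabla\lvert X\rvert^{2} + 2\Rc(X) = 2\left(\la + \frac{1}{m}\lvert X\rvert^{2}\right)X$, then takes the divergence and integrates. The terms coming from $\Delta\lvert X\rvert^{2}$, $\Div(\Rc(X))$ and $\la\Div(X) + \langle\nabla\la, X\rangle = \Div(\la X)$ integrate to zero. The leftover term $\frac{4}{m}\langle\nabla_X X, X\rangle$ is dropped because $\nabla_X X = 0$, which leaves $\frac{2}{m}\int_M \lvert X\rvert^{2}\Div(X)\,\dvol_g = 0$.

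That leftover term is exactly $\frac{2}{m}X(\lvert X\rvert^{2})$, and every other term in the paper's identity is an exact divergence. So the paper's computation reduces to integrating $\Div(\lvert X\rvert^{2}X) = \lvert X\rvert^{2}\Div(X) + X(\lvert X\rvert^{2})$ and using $X(\lvert X\rvert^{2})=0$, which is your argument. The $m$-QE equation contributes nothing essential here, as you correctly flagged.

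Your version therefore proves more. The conclusion holds for every geodesic vector field on any closed Riemannian manifold, and more generally for any field with $X(\lvert X\rvert^{2}) = 0$. That also covers the constant-norm case of Lemma \ref{lem:integration-of-mod-X-square-Div-X-is-zero-for-conformal-vec-field} in the same stroke. The paper's route only gains consistency with the cited source; it gives no additional information for this lemma.
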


\begin{proof}
The proof of this result is contained in the proof of Theorem F in \cite{CarvLimCosta-Filho2026}. But for reader's convenience we give a rough sketch of the proof. Since $X$ is a geodesic vector field, we have $\nabla_{X} X = 0$. Using this condition and from the equation \eqref{eq:gen-m-quasi-Ein} one obtain
$$ \frac{1}{2} \nabla \lvert X \rvert^{2} + 2 \Rc(X) = 2 \left( \la + \frac{1}{m} \lvert X \rvert^{2}\right) X $$ (see equation $(9)$ in \cite{CarvLimCosta-Filho2026}). Here, $\Rc(X)$ is interpreted as $\Rc(X, Y)= \langle \Rc(X), Y \rangle$ for any $Y\in \mathfrak{X}(M)$. The above identity implies that
\begin{equation*}
\frac{1}{2} \Delta \lvert X \rvert^{2} + 2 \Div(\Rc(X)) = 2 \left( \la + \frac{\lvert X \rvert^{2}}{m} \right) \Div(X) + 2 \langle \nabla \la, X \rangle + \frac{4}{m} \langle \nabla_{X} X, X \rangle.
\end{equation*}
The last term on the right-hand side is zero since $\nabla_{X} X = 0$. Now, using the formula $$\la\Div(X) + \langle \nabla \la, X \rangle = \Div(\la X)$$ and the divergence theorem on closed manifolds, we finally obtain the desired result.
\end{proof}

\vspace*{1mm}
We now recall some basic properties and identities hold on generalized $m$-QE manifold of Yamabe-type. Firstly, taking trace in the equation \eqref{eq:gen-m-QE-Yam-type} we obtain
\begin{equation*}
R + \Div(X) - \frac{1}{m} \vert X \rvert^{2} = n (R - \rho),
\end{equation*}
where $R$ denotes the scalar curvature of the metric $g$. We re-write this identity as follows
\begin{equation}\label{eq:identity-after-taking-trace-of-gen-mQE-Yam-type}
n \rho = (n - 1) R - \Div(X) + \frac{1}{m} \vert X \rvert^{2}.
\end{equation}
In particular, applying $\nabla$ on both sides and taking inner product with $X$ itself, we get
\begin{equation}\label{eq:identity-after-taking-trace-and-applying-nabla-and-inner-prod-with-X}
n \langle X, \nabla \rho \rangle = (n-1) \langle X, \nabla R \rangle - \langle \nabla \Div(X), X \rangle + \frac{1}{m} X (\lvert X\rvert^2).
\end{equation}
On the other hand, the equation \eqref{eq:gen-m-QE-Yam-type} on the pair $(X, X)$ gives us
\begin{equation*}
\Rc(X, X) + \langle \nabla_{X}X, X \rangle - \frac{1}{m} \lvert X \rvert^{4} = (R - \rho) \lvert X \rvert^{2}.
\end{equation*}
Since $2 \langle \nabla_{X}X, X \rangle = X(\lvert X \rvert^{2})$, it can be re-written as
\begin{equation}\label{eq:evaluating-gen-m-QE-Yam-type-on-the-pair-X-X}
\Rc(X, X) + \frac{1}{2} X (\lvert X \rvert^{2}) = \frac{1}{m} \lvert X \rvert^{4} + (R - \rho) \lvert X \rvert^{2}.
\end{equation}

\vspace*{1mm}
We also have another crucial identity on any generalized $m$-QE manifold due to Ghosh \cite{Ghosh2026}. Note that this identity was proved in \cite{BarrosRibeiro2014} for the gradient case, i.e. when $X = \nabla f$, and before that it was established for $m$-QE manifolds (i.e. when $\la$ is constant) in \cite{BarrosRibeiro2012}.
	
\begin{lem}
Let $(M, g, X, \la)$ be a generalized $m$-QE manifold of dimension $n$. Then
\begin{align}\label{eq:Bochner-type-identity-for-gen-m-QE}
\frac{1}{2} \Delta \lvert X \rvert^{2} = \lvert\nabla X \rvert^{2} - \Rc(X, X) + \frac{2}{m} \lvert X\rvert^{2}\Div(X) -(n-2)\langle X, \nabla \la \rangle.
\end{align}
In particular, for a generalized $m$-QE manifold of Yamabe-type we have
\begin{align}\label{eq:Bochner-type-identity-for-gen-m-QE-Yam-type}
\frac{1}{2} \Delta \lvert X \rvert^{2} = \lvert\nabla X \rvert^{2} - \Rc(X, X) + \frac{2}{m} \lvert X\rvert^{2}\Div(X) -(n-2)\langle X, \nabla (R - \rho) \rangle.
\end{align}
The last term on the right-hand side of both identities vanishes for any $m$-QE manifold. 
\end{lem}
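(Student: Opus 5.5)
The plan is to prove \eqref{eq:Bochner-type-identity-for-gen-m-QE} by a direct tensor computation in local coordinates. I would start from the standard Bochner decomposition
\[
\tfrac12\Delta\lvert X\rvert^{2}=\lvert\nabla X\rvert^{2}+\langle \Delta X, X\rangle,\qquad (\Delta X)_j=\nabla^i\nabla_i X_j .
\]
It then suffices to show
\[
\langle \Delta X,X\rangle=-\Rc(X,X)+\tfrac{2}{m}\lvert X\rvert^{2}\Div(X)-(n-2)\langle X,\nabla\la\rangle .
\]
The Yamabe-type version \eqref{eq:Bochner-type-identity-for-gen-m-QE-Yam-type} then follows by substituting $\la=R-\rho$. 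If $\la$ is constant the last term vanishes, which gives the remark about $m$-QE manifolds.

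To compute $\Delta X$, I would write \eqref{eq:gen-m-quasi-Ein} as
\[
R_{ij}+\tfrac12(\nabla_iX_j+\nabla_jX_i)-\tfrac1m X_iX_j=\la g_{ij}
\]
and take the divergence in $i$. Three ingredients are needed:
\begin{itemize}
\item the contracted second Bianchi identity $\nabla^iR_{ij}=\tfrac12\nabla_jR$;
\item the Ricci commutation formula $\nabla^i\nabla_jX_i=\nabla_j\Div(X)+R_{jk}X^k$;
\item the product rule $\nabla^i(X_iX_j)=\Div(X)X_j+(\nabla_XX)_j$.
\end{itemize}
Together they give
\[
\tfrac12\nabla_jR+\tfrac12\bigl((\Delta X)_j+\nabla_j\Div(X)+R_{jk}X^k\bigr)-\tfrac1m\bigl(\Div(X)X_j+(\nabla_XX)_j\bigr)=\nabla_j\la .
\]

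Next I would eliminate $\nabla\Div(X)$ using the trace identity $R+\Div(X)-\frac1m\lvert X\rvert^2=n\la$. Differentiating it gives
\[
\nabla_j\Div(X)=n\nabla_j\la-\nabla_jR+\tfrac{2}{m}X^k\nabla_jX_k .
\]
Substituting this and contracting with $X^j$, the $X(R)$ terms cancel, and $2X(\la)-nX(\la)$ produces $-(n-2)X(\la)$. The two quadratic terms $\tfrac2m\langle\nabla_XX,X\rangle$ and $\tfrac2m X^jX^k\nabla_jX_k$ are the same scalar $X^jX^k\nabla_jX_k$ and cancel. What remains is exactly the claimed expression for $\langle\Delta X,X\rangle$.

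The main obstacle is bookkeeping: the signs in the Ricci commutation, the factor $\tfrac12$ from the Bianchi identity, and checking that the cubic terms cancel rather than double. I would verify these by checking the gradient case $X=\nabla f$ against the known identity of \cite{BarrosRibeiro2014}.
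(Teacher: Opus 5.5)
Your proposal is correct and takes essentially the paper's route. The signs, the factor $\tfrac12$ from the contracted Bianchi identity, and the cancellation of the two $\tfrac{2}{m}\langle \nabla_X X, X\rangle$ terms all check out. The paper takes the divergence of the equation, evaluates it on $X$, and quotes the Petersen--Wylie formula for $\Div(\sL_X g)(X)$. Your decomposition $\tfrac12\Delta\lvert X\rvert^{2}=\lvert\nabla X\rvert^{2}+\langle\Delta X,X\rangle$, combined with the Ricci commutation formula, is exactly what produces that formula, and the differentiated trace identity finishes the argument the same way.
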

	
The proof of identity \eqref{eq:Bochner-type-identity-for-gen-m-QE} for a generalized $m$-QE manifold is based on the following formula about the divergence of the Lie derivative of the metric along a vector field.
	
\begin{lem}[{\cite[Lemma 2.1]{PetWylie2009}}]
Suppose $Y$ is a smooth vector field on a Riemannian manifold $(M, g)$. Then
\begin{equation*}
\Div(\sL_{Y} g)(X) = \frac{1}{2}\Delta \lvert Y \rvert^{2} - \lvert\nabla Y \rvert^{2} + \Rc(Y, Y) + \nabla_{Y}\Div(Y).
\end{equation*}
\end{lem}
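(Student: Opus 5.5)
The statement to prove is the Petersen--Wylie formula
\[
\Div(\sL_{Y} g)(Z) = \tfrac{1}{2}\Delta |Y|^2 - |\nabla Y|^2 + \Rc(Y,Y) + \nabla_Y \Div(Y),
\]
where the left side is the divergence of the symmetric $2$-tensor $\sL_Y g$ evaluated on the vector field $Y$ itself. The formula as printed says $X$ on the left, which is presumably a typo for $Y$, because the right-hand side is a scalar quadratic in $Y$. So the plan is to prove the version in which the $1$-form $\Div(\sL_Y g)$ is evaluated at $Y$. Then I sum against $Y$ and rearrange to produce the Bochner formula.

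\textbf{Step 1: split the divergence.} Work in a local orthonormal frame $\{e_i\}$, and write $(\sL_Y g)_{ij} = \nabla_i Y_j + \nabla_j Y_i$. Then
\[
\Div(\sL_Y g)_j = \nabla_i\nabla_i Y_j + \nabla_i \nabla_j Y_i .
\]

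\textbf{Step 2: the rough-Laplacian term.} For the first term I use the Bochner identity
\[
\tfrac12\Delta|Y|^2 = |\nabla Y|^2 + \langle \nabla^*\nabla\text{-trace}\, Y, Y\rangle,
\qquad \nabla^*\nabla\text{-trace}\, Y := \nabla_i\nabla_i Y .
\]
Contracting with $Y^j$ therefore gives
\[
Y^j \nabla_i\nabla_i Y_j = \tfrac12\Delta|Y|^2 - |\nabla Y|^2 .
\]

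\textbf{Step 3: the commuted term.} For the second term I commute the covariant derivatives with the Ricci identity:
\[
\nabla_i\nabla_j Y_i = \nabla_j\nabla_i Y_i + R_{ij} Y_i .
\]
This equals $\nabla_j \Div(Y) + \Rc(Y)_j$. Contracting with $Y^j$ gives $\nabla_Y\Div(Y) + \Rc(Y,Y)$.

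\textbf{Step 4: combine.} Adding the contributions from Steps 2 and 3 gives exactly the claimed formula.

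The main obstacle is the sign convention in the Ricci identity, since the sign of the curvature contraction differs between sources. I would fix the sign by checking against the standard identity $\Div(\nabla^2 f) = d\Delta f + \Rc(\nabla f)$, with $Y = \nabla f$ and $\sL_Y g = 2\nabla^2 f$. That identity confirms the plus sign in front of $R_{ij}Y_i$. Everything else is routine tensor manipulation, and the frame computation is tensorial, so it is globally valid.
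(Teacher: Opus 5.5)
Your argument is correct, and you are right that the $X$ on the left must be $Y$. The paper gives no proof of its own and only cites Petersen--Wylie; your computation is the standard one: split $\Div(\sL_Y g)$ into $\nabla_i\nabla_i Y$ plus the commuted term, use the product rule for $\tfrac12\Delta|Y|^2$, and apply the contracted Ricci identity $\nabla_i\nabla_j Y_i = \nabla_j\Div(Y) + R_{ji}Y_i$, whose plus sign you correctly confirm via $\Div(\nabla^2 f) = d\Delta f + \Rc(\nabla f)$. The only blemish is notational: calling $\nabla_i\nabla_i Y$ a ``$\nabla^*\nabla$-trace'' clashes with the usual $\nabla^*\nabla = -\nabla_i\nabla_i$, but you define it explicitly, so the signs in Step 2 are right.
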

	
We next recall the well-known Yano's integral formula \cite{Yano1970-bk}, which relates the $L^2$-norm of the Lie derivative of a Riemannian metric along a smooth vector field to the Ricci curvature of that vector field. More precisely, if $Y \in \mathfrak{X}(M)$ is a smooth vector field on a {\em closed} Riemannian manifold $(M, g)$, then Yano's integral formula is given by
\begin{equation}\label{eq:Yano-integral-formula}
\frac{1}{2} \int_{M} \lvert \sL_{Y} g \rvert^{2} \dvol_{g} = \int_{M} \Big(\lvert\nabla Y \rvert^{2} + \Div(Y)^{2} - \Rc(Y, Y) \Big) \dvol_{g}.
\end{equation}
This classical formula is widely used to study the rigidity and non-existence of special vector fields. For instance, applying this formula one can see that any Killing vector field on a closed Riemannian manifold with non-positive Ricci curvature is parallel. In fact, there is no non-trivial Killing vector field on a closed Riemannian manifold with negative Ricci curvature.

\vspace*{1mm}
We end this section by recalling the following result due to Caminha \cite{Caminha2011}.
	
\begin{prop}[{\cite[Proposition 2.1]{Caminha2011}}]
\label{prop:Caminha-complete-noncompact-case}
Let $X$ be a smooth vector field on the complete, non-compact, oriented Riemannian manifold $M^n$, such that $\Div(X)$ does not change sign on $M$. If $\lvert X \rvert \in L^1(M)$, then $\Div(X) = 0$ on $M$.
\end{prop}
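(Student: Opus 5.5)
The plan is a cutoff-function (Gaffney-type) argument. By replacing $X$ with $-X$ if necessary, I may assume $\Div(X) \geq 0$ on $M$; the goal is then to show $\int_M \Div(X)\,\dvol_g = 0$, which forces $\Div(X) \equiv 0$ by non-negativity and continuity.

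Fix a point $o \in M$ and, for each $r > 0$, a Lipschitz cutoff $\varphi_r : M \to [0,1]$. I take $\varphi_r \equiv 1$ on the geodesic ball $B_r(o)$ and $\varphi_r \equiv 0$ outside $B_{2r}(o)$. In between I set it radially linear in $d(o,\cdot)$, so that $\lvert \nabla \varphi_r \rvert \leq 1/r$ almost everywhere. Completeness (via Hopf--Rinow) guarantees that the closed balls are compact, so $\varphi_r$ has compact support. The divergence theorem then applies to the compactly supported Lipschitz vector field $\varphi_r X$; this is where the orientation hypothesis is used, though one could equally mollify $\varphi_r$ to a smooth cutoff with the same gradient bound. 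This yields
\begin{equation*}
0 = \int_M \Div(\varphi_r X)\,\dvol_g = \int_M \varphi_r \Div(X)\,\dvol_g + \int_M \langle \nabla \varphi_r, X \rangle\,\dvol_g .
\end{equation*}

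From this identity I get the estimate
\begin{equation*}
0 \leq \int_{B_r(o)} \Div(X)\,\dvol_g \leq \int_M \varphi_r \Div(X)\,\dvol_g \leq \frac{1}{r}\int_{B_{2r}(o)\setminus B_r(o)} \lvert X \rvert\,\dvol_g \leq \frac{1}{r}\,\lVert X \rVert_{L^1(M)} ,
\end{equation*}
which uses $\Div(X) \geq 0$ and $0 \leq \varphi_r \leq 1$. Letting $r \to \infty$ and applying monotone convergence on the left gives $\int_M \Div(X)\,\dvol_g = 0$. Since the integrand is non-negative and continuous, $\Div(X) \equiv 0$.

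The only delicate point is the construction of the cutoff, namely justifying the divergence theorem for a merely Lipschitz function. I would handle it either by smoothing the distance function (a standard approximation keeping $\lvert\nabla \varphi_r\rvert \leq 2/r$) or by noting that the integration-by-parts identity holds for $W^{1,\infty}$ functions with compact support. Once the cutoff is in place, the $L^1$ hypothesis on $\lvert X\rvert$ kills the boundary term and the rest is immediate.
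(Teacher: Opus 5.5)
Your proof is correct. With the $1/r$-Lipschitz radial cutoff $\varphi_r$, integration by parts for compactly supported Lipschitz functions gives $0 \le \int_{B_r(o)} \Div(X)\,\dvol_g \le \frac{1}{r}\lVert X\rVert_{L^1(M)}$, and monotone convergence together with continuity and the sign of $\Div(X)$ finishes the argument.

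The paper gives no proof of this statement; it only quotes Caminha. Caminha's argument applies Yau's $L^1$ version of Stokes' theorem on complete manifolds to the $(n-1)$-form $\omega = \iota_X \dvol_g$. Since $d\omega = \Div(X)\,\dvol_g$ and $\lvert \omega\rvert = \lvert X\rvert$, that theorem yields an exhaustion of $M$ by domains $B_i$ with $\int_{B_i}\Div(X)\,\dvol_g \to 0$. The proof then ends with the same sign-plus-monotone-convergence step you use.

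So the two routes differ only in how the ``boundary term at infinity'' is killed:
\begin{itemize}
\item Citing Yau is shorter.
\item Your cutoff essentially reproves Yau's lemma in exactly the form needed. Averaging over radii $t \in [r,2r]$ through the Lipschitz cutoff also sidesteps the possible non-smoothness of geodesic spheres, which a direct Stokes argument on balls would have to handle.
\end{itemize}

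Your version also gives the explicit rate $\int_{B_r(o)}\Div(X)\,\dvol_g = O(1/r)$. It shows that orientability is not actually needed: the divergence theorem for compactly supported fields holds with the Riemannian measure on any Riemannian manifold. So your remark that the orientation hypothesis is ``used'' at that step is unnecessary; orientation enters only through Caminha's differential-form formulation.
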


\vspace*{2mm}
\section{Proof of the results}

\subsection{Proof of the results in the compact setting}
\label{subsec:proof of main results_compact case}
	
In this sub-section we prove our main results when the manifold is compact.
	
\begin{proof}[\textbf{Proof of Theorem \ref{thm:parallel-conclusion-for-geod-vec-field-on-gen-m-QE-Yam-type}}]
Since the manifold $M$ is closed, integrating \eqref{eq:Bochner-type-identity-for-gen-m-QE-Yam-type} we get
\begin{align*}
\int_M \lvert \nabla X \rvert^{2} \dvol_{g} &= \int_{M} \Rc(X, X) \dvol_{g} - \frac{2}{m} \int_M \lvert X \rvert^{2} \Div(X) \dvol_{g} \\ & \hspace*{1cm} + (n - 2) \int_{M} \langle X, \nabla (R - \rho) \rangle \dvol_{g}.
\end{align*}
If $X$ is geodesic (or if $\lvert X\rvert$ is constant), then by Lemma \ref{lem:integration-of-mod-X-square-Div-X-is-zero-for-geod-vec-field} (or by Lemma \ref{lem:integration-of-mod-X-square-Div-X-is-zero-for-conformal-vec-field}) it yields that the second term on the right-hand side of above identity is zero. So, we have
\begin{align}\label{eq:identity-1-in-proof-of-Thm}
\int_M \lvert \nabla X \rvert^{2} \dvol_{g} = \int_{M} \Rc(X, X) \dvol_{g} + (n - 2) \int_{M} \langle X, \nabla (R - \rho) \rangle \dvol_{g}.
\end{align}
Now, integrating \eqref{eq:identity-after-taking-trace-and-applying-nabla-and-inner-prod-with-X} we obtain
\begin{align*}
n \int_M \langle X, \nabla \rho \rangle \dvol_{g} &= (n - 1) \int_M \langle X, \nabla R \rangle \dvol_{g} - \int_{M} \langle \nabla \Div(X), X \rangle \dvol_{g} \\ &\hspace*{2.5cm} + \frac{1}{m} \int_M X(\lvert X \rvert^{2}) \dvol_{g}.
\end{align*}
Using the following formulae
\begin{equation}\label{eq:two-divergence-formulae}
\begin{split}
\Div(\Div(X)\cdot X) &= \Div(X)^2 + \langle \nabla \Div(X), X \rangle; \\
\Div(\lvert X \rvert^{2} X) &= \lvert X \rvert^{2} \Div(X) + X(\lvert X \rvert^{2}),
\end{split}
\end{equation}
and applying the divergence theorem and using Lemma \ref{lem:integration-of-mod-X-square-Div-X-is-zero-for-geod-vec-field} again, it yields that
\begin{align*}
n \int_M \langle X, \nabla \rho \rangle \dvol_{g} = (n - 1) \int_M \langle X, \nabla R \rangle \dvol_{g} + \int_{M} \Div(X)^{2} \dvol_{g}.
\end{align*}
We re-write this identity as follows
\begin{align*}
(n - 1) \int_M \langle X, \nabla (R - \rho) \rangle \dvol_{g} = \int_M \langle X, \nabla \rho \rangle \dvol_{g} - \int_{M} \Div(X)^{2} \dvol_{g}.
\end{align*}
Therefore, from the hypothesis \eqref{eq:integration-of-X-rho-nonpositive-assumption} we have
$$ \int_M \langle X, \nabla (R - \rho) \rangle \dvol_{g} \leq 0.$$
Then, from \eqref{eq:identity-1-in-proof-of-Thm} and the assumption \eqref{eq:integration-of-Ric-X-X-is-nonpositive} on the Ricci tensor it implies that
$$ \int_M \lvert \nabla X \rvert^{2} \dvol_{g} \leq \int_{M} \Rc(X, X) \dvol_{g} \leq 0. $$
Thus, we must have $\nabla X = 0$, i.e. $X$ is a parallel vector field. So, the proof is completed.
\end{proof}

Next, we prove our Proposition \ref{prop:parallel-conclusion-for-conf-vec-field-on-gen-m-QE-Yam-type}.
	
\begin{proof}[{\textbf{Proof of Proposition \ref{prop:parallel-conclusion-for-conf-vec-field-on-gen-m-QE-Yam-type}}}]
Since the manifold $M$ is compact and the potential vector field $X$ is conformal, applying Theorem II.9 of Bourguignon-Ezin \cite[pp. 727]{BourEzin1987} we have 
\begin{equation}\label{eq:integral-condition-obtained-by-BourEzin-theorem}
\int_{M} \langle X, \nabla R \rangle \dvol_{g} = 0.
\end{equation}
Now as in the proof of Theorem \ref{thm:parallel-conclusion-for-geod-vec-field-on-gen-m-QE-Yam-type}, since $X$ is conformal, we may apply Lemma \ref{lem:integration-of-mod-X-square-Div-X-is-zero-for-conformal-vec-field} to obtain the same identity \eqref{eq:identity-1-in-proof-of-Thm}, i.e.
\begin{equation*}
\int_M \lvert \nabla X \rvert^{2} \dvol_{g} = \int_{M} \Rc(X, X) \dvol_{g} + (n - 2) \int_{M} \langle X, \nabla (R - \rho) \rangle \dvol_{g},
\end{equation*}
and using \eqref{eq:integral-condition-obtained-by-BourEzin-theorem} it is now simplified to
\begin{equation*}
\int_M \lvert \nabla X \rvert^{2} \dvol_{g} = \int_{M} \Rc(X, X) \dvol_{g} - (n - 2) \int_{M} \langle X, \nabla \rho \rangle \dvol_{g}.
\end{equation*}
On the other hand, as in the proof of Theorem \ref{thm:parallel-conclusion-for-geod-vec-field-on-gen-m-QE-Yam-type}, we also have
\begin{align*}
n \int_M \langle X, \nabla \rho \rangle \dvol_{g} &= (n - 1) \int_M \langle X, \nabla R \rangle \dvol_{g} + \int_{M} \Div(X)^{2} \dvol_{g} \\ &= \int_{M} \Div(X)^{2} \dvol_{g},
\end{align*}
where we used \eqref{eq:integral-condition-obtained-by-BourEzin-theorem} again in the second line. It implies that
\begin{equation*}
\int_M \lvert \nabla X \rvert^{2} \dvol_{g} = \int_{M} \Rc(X, X) \dvol_{g} - \frac{n-2}{n} \int_{M} \Div(X)^2 \dvol_{g}.
\end{equation*}
Now, the assumption \eqref{eq:integration-of-Ric-X-X-is-nonpositive} on the Ricci tensor implies that $$ \int_M \lvert \nabla X \rvert^{2} \dvol_{g} \leq 0,$$ and hence $\nabla X = 0$. So, $X$ is a parallel vector field and the proof is completed.
\end{proof}

We now present the proof of Theorem \ref{thm:Killing-of-gen-m-QE-Yam-type-under-certain-condition}.
	
\begin{proof}[{\textbf{Proof of Theorem \ref{thm:Killing-of-gen-m-QE-Yam-type-under-certain-condition}}}]
The proof is basically an appropriate modification in the proof of Theorem A in \cite{CarvLimCosta-Filho2026}. Since the quadruple $(M, g, X, \rho)$ is a closed generalized $m$-QE manifold of Yamabe-type, integrating the identity \eqref{eq:Bochner-type-identity-for-gen-m-QE-Yam-type} we obtain
\begin{align*}
\int_{M} \Big(\lvert\nabla X \rvert^{2} - \Rc(X, X)\Big) \dvol_{g} &= - \frac{2}{m} \int_{M} \lvert X \rvert^{2} \Div(X) \dvol_{g} + (n - 2) \int_{M} \langle X , \nabla R \rangle \dvol_{g}.\\ &\hspace*{1cm} - (n - 2) \int_{M} \langle X , \nabla \rho \rangle \dvol_{g}.
\end{align*}
On the other hand, from Yano's integral formula \eqref{eq:Yano-integral-formula} we have
\begin{align*}
\int_{M} \Big(\lvert\nabla X \rvert^{2} - \Rc(X, X)\Big) \dvol_{g} = \frac{1}{2} \int_{M} \lvert \sL_{X} g \rvert^{2} \dvol_{g} - \int_{M} \Div(X)^{2} \dvol_{g}.
\end{align*}
Combining above expressions, it yields that
\begin{align*}
\frac{1}{2} \int_{M} \lvert \sL_{X} g \rvert^{2} \dvol_{g} &= \int_{M} \Div(X)^{2} \dvol_{g} - \frac{2}{m} \int_{M} \lvert X \rvert^{2} \Div(X) \dvol_{g} \\ &\hspace*{1cm} + (n - 2) \int_{M} \langle X , \nabla R \rangle \dvol_{g} - (n - 2) \int_{M} \langle X , \nabla \rho \rangle \dvol_{g}.
\end{align*}
Next, integrating \eqref{eq:identity-after-taking-trace-and-applying-nabla-and-inner-prod-with-X} and using the divergence theorem and formulae \eqref{eq:two-divergence-formulae} we get
\begin{align*}
\frac{1}{m} \int_{M} \lvert X \rvert^{2} \Div(X) \dvol_{g} &= (n - 1) \int_{M} \langle X , \nabla R \rangle \dvol_{g} + \int_{M} \Div(X)^{2} \dvol_{g} \\ &\hspace*{0.5cm} - n \int_{M} \langle X , \nabla \rho \rangle \dvol_{g}.
\end{align*}
It implies that
\begin{align*}
\frac{1}{2} \int_{M} \lvert \sL_{X} g \rvert^{2} \dvol_{g} = - \int_{M} \Div(X)^{2} \dvol_{g} - n \int_{M} \langle X , \nabla R \rangle \dvol_{g} + (n + 2) \int_{M} \langle X , \nabla \rho \rangle \dvol_{g}.
\end{align*}
Now, for each of the criterion we see that the right-hand side is non-positive. Note that under the third criterion $(iii)$ the middle term on the right-hand side is in fact zero (see \eqref{eq:integral-condition-obtained-by-BourEzin-theorem}). It implies that we must have $\sL_{X} g = 0$, i.e. hence $X$ is a Killing vector field. So, we get the desired result.
\end{proof}
	
We are now in position to prove Proposition \ref{prop:div-free-plus-upper-bound-on-rho-implies-triviality}.
	
\begin{proof}[{\textbf{Proof of Proposition \ref{prop:div-free-plus-upper-bound-on-rho-implies-triviality}}}]
Since $\Div(X) = 0$, the identity \eqref{eq:identity-after-taking-trace-of-gen-mQE-Yam-type} becomes
\begin{align}\label{eq:identity-taking-trace-gen-mQE-Yam-type-with-div-zero}
n \rho = (n - 1) R + \frac{1}{m} \lvert X \rvert^{2}.
\end{align}
Next, integrating the identity \eqref{eq:evaluating-gen-m-QE-Yam-type-on-the-pair-X-X} on the manifold $M$ (which is closed) and applying the divergence theorem and using the second formula in \eqref{eq:two-divergence-formulae} one obtain
\begin{align*}
\int_{M} \Rc(X, X) \dvol_{g} = \frac{1}{m}\int_{M} \lvert X \rvert^{4} \dvol_{g} + \int_{M} (R - \rho)\lvert X \rvert^{2} \dvol_{g}.
\end{align*}
On the other hand, since $\Div(X) = 0$, integrating \eqref{eq:Bochner-type-identity-for-gen-m-QE-Yam-type} we have
\begin{align*}
\int_{M} \lvert\nabla X \rvert^{2} \dvol_{g} = \int_{M} \Rc(X, X) \dvol_{g}.
\end{align*}
It implies that
\begin{align*}
\int_{M} \lvert\nabla X \rvert^{2} \dvol_{g} = \int_{M} \left(\frac{1}{m} \lvert X \rvert^{2}  + R - \rho\right)\lvert X \rvert^{2} \dvol_{g}.
\end{align*}
Now using \eqref{eq:identity-taking-trace-gen-mQE-Yam-type-with-div-zero} and simplifying we obtain
\begin{align*}
\int_{M} \lvert\nabla X \rvert^{2} \dvol_{g} = \frac{1}{n-1} \int_{M} \left(\frac{n-2}{m} \lvert X \rvert^{2} + \rho\right)\lvert X \rvert^{2} \dvol_{g}.
\end{align*}
Then the condition \eqref{eq:upper-bound-assumption-for-rho} implies that $X$ must vanishes identically on $M$.
\end{proof}
	
We remark that the proof of Corollary \ref{cor:div-free-plus-scal-curv-nonpositive-implies-triviality} immediately follows from the identity \eqref{eq:identity-taking-trace-gen-mQE-Yam-type-with-div-zero} and Proposition \ref{prop:div-free-plus-upper-bound-on-rho-implies-triviality}. Also, note that the Corollary \ref{cor:div-free-plus-scal-curv-nonpositive-implies-triviality} under the second assumption (namely, $m < 0$ and $R \leq \rho$) is basically the Corollary E in \cite{CarvLimCosta-Filho2026} for generalized $m$-QE manifolds of Yamabe-type.

\subsection{Proof of the results in the non-compact case}
\label{subsec:proof of main results_non-compact case}
	
In this sub-section we prove our result when the manifold is complete and non-compact.
	
\begin{proof}[{\textbf{Proof of Theorem \ref{thm:triviality-of-gen-m-QE-mfd-complete-noncompact-case}}}]
Since the quadruple $(M^n, g, X, \rho)$ is a generalized $m$-QE manifold of Yamabe-type, from the relation \eqref{eq:identity-after-taking-trace-of-gen-mQE-Yam-type} we have
\begin{align*}
\Div(X) = (n - 1) R - n \rho + \frac{1}{m} \lvert X \rvert^{2}.
\end{align*}
Then from the hypothesis \eqref{eq:condition-implies-X-is-zero-complete-noncompact-case} we see that either $\Div(X) \geq 0$ or $\Div(X) \leq 0$, i.e. $\Div(X)$ does not change sign on $M$. Now, since $\lvert X \rvert \in L^1(M)$, by applying Proposition \ref{prop:Caminha-complete-noncompact-case} we get that $\Div(X) = 0$. In particular, we have
\begin{align*}
(n - 1) R - n \rho + \frac{1}{m} \lvert X \rvert^{2} = 0.
\end{align*}
Once again using \eqref{eq:condition-implies-X-is-zero-complete-noncompact-case} we conclude that $X$ vanishes identically on $M$. Therefore, $(M^n, g)$ must be an Einstein manifold since the dimension $n \geq 3$.
\end{proof}
	
We finally prove our Theorem \ref{thm:parallel-vec-field-conclusion-gen-mQE-mfd-Yam-type-non-compact-case}.

\begin{proof}[{\textbf{Proof of Theorem \ref{thm:parallel-vec-field-conclusion-gen-mQE-mfd-Yam-type-non-compact-case}}}]
Firstly, we re-write the identity \eqref{eq:identity-after-taking-trace-and-applying-nabla-and-inner-prod-with-X} as follows
\begin{align*}
\langle X, \nabla\rho \rangle = (n - 1) \langle X, \nabla(R - \rho) \rangle - \langle \nabla\Div(X), X \rangle + \frac{1}{m} X (\vert X \vert^2).
\end{align*}
Now, multiplying both sides by $(n - 2)$ and using the identity \eqref{eq:Bochner-type-identity-for-gen-m-QE-Yam-type} we get
\begin{align*}
(n - 2) \langle X, \nabla\rho \rangle &= (n - 1)\vert\nabla X \vert^{2} - (n - 1)\Rc(X, X) + \frac{2(n - 1)}{m}\vert X \vert^{2}\Div(X) - \frac{n-1}{2}\Delta\vert X \vert^{2} \\
&\hspace*{1cm} - (n - 2)\langle \nabla\Div(X), X \rangle + \frac{n - 2}{m} X (\vert X \vert^2).
\end{align*}
Next, we apply the divergence formulae \eqref{eq:two-divergence-formulae} and simplifying we obtain
\begin{align*}
(n - 2) \langle X, \nabla\rho \rangle &= (n - 1)\vert\nabla X \vert^{2} - (n - 1)\Rc(X, X) + (n - 2)\Div(X)^{2} - \frac{n}{m}X(\vert X \vert^{2}) \\
&\hspace*{0.8cm} + \Div \left( \frac{2(n - 1)\vert X \vert^{2}}{m}X - (n - 2)\Div(X)X \right) - \frac{n-1}{2}\Delta\vert X \vert^{2}.
\end{align*}
Since $\Delta\vert X \vert^{2} = 2 \Div\big(\vert X \vert \cdot \nabla \vert X \vert\big)$, this is further simplified to the following formula
\begin{align*}
&\Div\left((n - 2)\Div(X)X + (n - 1)\vert X \vert \cdot \nabla \vert X \vert - \frac{2(n - 1)\vert X \vert^{2}}{m}X\right)\\ &= (n - 1)\vert\nabla X \vert^{2} - (n - 1)\Rc(X, X) + (n - 2)\Div(X)^{2} - \frac{n}{m}X(\vert X \vert^{2}) - (n - 2) \langle X, \nabla\rho \rangle.
\end{align*}
In other words, if we define a smooth vector field $V \in \mathfrak{X}(M)$ by
$$ V := (n - 2)\Div(X)X + (n - 1)\vert X \vert \cdot \nabla \vert X \vert - \frac{2(n - 1)\vert X \vert^{2}}{m}X, $$
then we have
\begin{equation}\label{eq:div-formula-for-new-vector-field-V}
\begin{split}
\Div(V) &= (n - 1)\vert\nabla X \vert^{2} + (n - 2)\Div(X)^{2} \\ &\hspace*{1cm} - (n - 2) \langle X, \nabla\rho \rangle - (n - 1)\Rc(X, X) - \frac{n}{m}X(\vert X \vert^{2}).
\end{split}
\end{equation}
But then the condition \eqref{eq:condition-to-guarantee-div-of-V-is-nonnegative} implies that $\Div(V) \geq 0$. On the other hand, using the Minkowski inequality we get
\begin{align*}
\frac{1}{n - 1}\Vert V \Vert_{L^{1}(M)} &\leq \frac{n - 2}{n - 1}\Vert \Div(X) X \Vert_{L^{1}(M)} + \Vert \vert X \vert \cdot \nabla \vert X \vert \Vert_{L^{1}(M)} + \frac{2}{\vert m \vert} \Vert \vert X \vert^{2} \cdot X \Vert_{L^{1}(M)} \\
&= \frac{n - 2}{n - 1}\Vert \Div(X) X \Vert_{L^{1}(M)} + \Vert \vert X \vert \cdot \nabla \vert X \vert \Vert_{L^{1}(M)} + \frac{2}{\vert m \vert} \Vert X \Vert_{L^{3}(M)}^{3}.
\end{align*}
Now, applying the H\"older inequality and using one of the hypothesis conditions satisfied by $X$ we see that $\vert V \vert \in L^{1}(M)$. Therefore, applying Proposition \ref{prop:Caminha-complete-noncompact-case} we must have $\Div(V) = 0$ on $M$. Then from \eqref{eq:div-formula-for-new-vector-field-V} we obtain
\begin{align*}
(n - 1)\vert\nabla X \vert^{2} + (n - 2)\Div(X)^{2} = (n - 2) \langle X, \nabla\rho \rangle + (n - 1)\Rc(X, X) + \frac{n}{m}X(\vert X \vert^{2}).
\end{align*}
Once again using the condition \eqref{eq:condition-to-guarantee-div-of-V-is-nonnegative} we conclude that $\nabla X = 0$ and hence $X$ is a parallel vector field. This completes the proof.
\end{proof}

\vspace*{2mm}
\section*{Acknowledgements}
	
This work was supported in part by an Institute Post Doctoral Fellowship (IPDF) from the Indian Institute of Technology Bombay.

\vspace*{2mm}
\section*{Conflict of Interest Statement}

The author declare no conflict of interest.

\vspace*{2mm}
\section*{Data Availability Statement}
	
Data sharing not applicable to this article since no datasets were generated or analyzed during the current study.

\vspace*{2mm}

\end{document}